\documentclass{article}

\usepackage[T1]{fontenc}
\usepackage{amsmath,amsthm,amsfonts,amssymb}
\usepackage[mathscr]{eucal}
\usepackage{indentfirst}
\usepackage{graphicx}
\usepackage{exscale,relsize}
\usepackage{multicol}
\usepackage[dvips,final]{epsfig}


\theoremstyle{plain}
\newtheorem{teo}{Theorem}[section]
\newtheorem{cor}{Corollary}[section]
\newtheorem{prop}{Proposition}[section]
\newtheorem{rem}{Remark}[section]
\newtheorem{lem}{Lemma}[section]
\newtheorem{defn}{Definition}[section]
\newtheorem{ex}{Example}[section]

\theoremstyle{definition}




\pagestyle{plain}



\def\N{{\mathbb N}}
\def\qed{\hfill $\square$\vskip.5cm}
\def\P{\mathbb{P}}
\def\C{{\mathcal C}}

\def\ind{{\bf 1\kern-1.2mm1}}
\def\Zset{Z\kern-0.45emZ}
\def\Rset{I\kern-0.3emR}
\def\Nset{I\kern-0.3emN}
\def\E{I\kern-0.3emE}
\def\V{\kern-0.3emVar}

\def\lp{ \left( }
\def\rp{ \right) }
\def\ll{ \left\{ }
\def\rl{ \right\} }
\def\lc{ \left[ }
\def\rc{ \right] }
\def\lv{ \left\vert }
\def\rv{ \right\vert }

\def\go{ \rightarrow }

\def\lf{\lfloor}
\def\rf{\rfloor}
\def\lr{\lceil}
\def\rr{\rceil}

\def\vs{\vskip.4cm}
\def\beq{ \vs\begin{displaymath} }
\def\eeq{ \end{displaymath}\vs\noindent  }
\def\beqn{ \vs\begin{equation} }
\def\eeqn{ \end{equation}\vs\noindent  }
\def\beqa{ \vs\begin{eqnarray*} }
\def\eeqa{ \end{eqnarray*}\vs\noindent }
\def\beqan{ \vs\begin{eqnarray} }
\def\eeqan{ \end{eqnarray}\vs\noindent }
\def\nn{ \nonumber }


\def\pmix{ $\psi$-mixing }

\def\Nset{I\kern-0.3emN}
\def\Zset{{Z\kern-0.6emZ}}
\def\Rset{I\kern-0.3emR} 
\def\bbe{I\kern-0.3emE}
\def\one{1\kern-0.3em1}

\def\L{\mathcal{L}}
\def\wn{x_{1}^{n}}

\title{The distribution of the overlapping function}
\author{Miguel Abadi
\thanks{Instituto de Matem\'{a}tica e Estat\'{i}stica, Universidade de S\~{a}o Paulo.}
\and Rodrigo Lambert {\small{*}}
\date{}
}

\begin{document}
\maketitle

\begin{abstract}
We consider the set of finite sequences of length $n$ over a finite or countable alphabet $\C$.
We consider the function defined over $\C^n$ which gives the size of the maximum overlap of a 
given sequence with a (shifted) copy of itself.
We compute the exact distribution and
the limiting distribution of this function when the sequence is chosen according to a product measure
with marginals identically distributed.
We give a point-wise upper bound for the velocity of this convergence.
Our results holds for a finite or countable alphabet.
The non-parametric distribution is related to the prime decomposition of positive integers.
We illustrate with some examples.
\end{abstract}
{\bf Running head} The distribution of the overlapping function\\
{\bf Subject class} 60Axx, 60C05, 60-XX, 60Fxx, 41A25\\
{\bf Keywords}  recurrence, overlapping, rare event, short return, first return, Renyi entropy.

\section{Introduction}

Consider a positive integer $n$.
Consider the space of all sequences of length $n$ defined over a finite or countable alphabet $\C$.
In this work we consider the  function $S_n$ defined over ${\mathcal C}^n$ and taken values on $\{0,\dots,n-1\}$.
For each string, this function gives the size of the maximum overlap  of the string with a 
(shifted) copy of itself and zero if there is no overlap. See Definition \ref{overlap}. 

The function $S_n$ is related to the \emph{first return} function $T_n$ that gives the \emph{minimum number of shifts} 
we have to apply to the sequence in order to
find an overlap with a copy of itself through the formula $S_n=n-T_n$.

The relevance of the first return function (and consequently of the overlapping function) 
was put in evidence in the statistical 
analysis of the Poincare recurrence.
To prove convergence of the number of occurrences of a string  (say of length $n$) 
as $n$ diverges, to the Poisson distribution it is necessary that the string  does not overlap itself
\cite{Hir}. 
Or at least, that the proportion that overlaps, with respect to $n$ is small
\cite{AbHitting, AbVeReturn}. 
If this is not the case, a compound Poisson distribution is the limiting law \cite{HaVaCompound}.
There are also some approximations for this limit \cite{ReSc99, ReSc98, RoSc}.

It also appears when we consider the time elapsed until the first occurrence of the string.
This time is called the \emph{hitting time}.
It is known that the hitting time can be well approximated by an exponential law with 
parameter given by the measure of the string \cite{HSV}.
But when the string overlaps itself, the parameter must be corrected by a factor which is
the probability that the string does not appear twice consecutively.
And this probability is given by the overlapping properties of the string
\cite{aba1, aba4, AbSa, GS}.

Yet,  it appears when consider the \emph{return time} instead of the hitting time. 
This law can  also be well approximated by an exponential law with parameter
being the measure of the string \cite{HSV}.
However when the string overlaps itself the limiting law is a convex combination of a
Dirac measure at the origin and an exponential law \cite{aba4, AbSa, AbVeReturn}.
As in the case of the hitting time, the parameter must be corrected by the 
above given factor. The weight of the convex combination is again this parameter.
Surprisingly, when taking expectation (but not any other moment \cite{aba4}) this parameter cancels.
This fact is hidden when looking at Kac's Lemma (\cite{Kac}).

As far as we know, the first paper to notice that the measure of all strings that have large overlaps 
converges to zero was \cite{CoGaSc}. The authors proved the exponential decay of this measure
when "large" means larger  or equal than $2n/3$. That result holds for $\psi$-mixing processes 
with exponentially decaying function $\psi$ and with finite alphabet.
Later, the same was generalized  in \cite{aba1} to $\phi$-mixing processes.
Here, "large" means larger or equal than a certain proportion $Cn$ where $C$ is a constant depending on 
the cardinal of the alphabet.

Let us denote with  $T_n(x_1^n)=n-S_n(x_1^n)$ the number of shifts needed
to get the first overlap of an
$n$-string $x_1^n=(x_1,\dots,x_n)$ with itself.
It was proved in \cite{STV} using Kolmogorov complexity function and independently in
\cite{ACS} using Shannon, Mc-Millan \& Breiman's Theorem that for a stochastic process over a finite alphabet, and with an ergodic  measure $\mu$ with  positive metric entropy satisfying the specification property \cite{KH},
the ratio $T_n/n$ verifies 
\beq
\liminf_{n\go \infty} \frac{T_n(x_1^n)}{n} = 1   \ , 
\eeq
for almost every sequence $x=(x_1,x_2,\dots)$.

This result has also been proved for a
class of non-uniformly expanding maps of the interval \cite{HSV} in the context of dynamical systems.

Even when the definition of $S_n$ (and $T_n$) are purely combinatorial, it is interesting to have in mind
an equivalent definition from the dynamical point of view.
Fixed an $n$-string $\wn$, the return time of $\wn$ over all infinite sequence $y_1^{\infty}$, such that $y_1^n = \wn$ (i.e., a cilinder indexed by $\wn$),
is defined explicitely as
\[
\tau_{\wn}( y_1^{\infty} )  = \inf\{ t \ge 2 \ | \ y_{t+1}^{t+n}=\wn \} \ ,
\]
(and infinite otherwise).
Then 
\beqn \label{dyn}
T_n(\wn) = \inf_{ x_{n+1}^{\infty} } \tau_{\wn}(x_{n+1}^{\infty}) \ .
\eeqn
Namely, the first return (of the finite sequence $x_1^n$) function $T_n(\wn)$ is the infimum 
of the return time of $\wn$ over all the realizations of the process $x_1^\infty$
that have  as initial condition $\wn$. 
Thus, $T_n$ is called the first return of (the $n$-string) $\wn$ in the dynamical literature.

A  large deviation principle for $T_n$  was succesively proved in \cite{AbCa, AbVa, HaVa}
for processes that verify different types of mixing conditions, including 
product measures, ergodic Markov chains, Gibbs measures with Holder continuous potential, etc
.
The limit of the deviation function is related to the Renyi entropy of the measure that generates the strings
(see, for instance, \cite{zyc} for definition and properties of the Renyi entropy). 
The existence of the Renyi entropies are also proved.

Studying celular automatas, \cite{RoTo} showed that for a counting measure over a finite alphabet, 
the proportion of strings with no overlap converges to a positive constant.

Until now, nothing was known about the distribution 
of $T_n$ and the existence of its limit  reminded unknown.
Since the sequence of random variables $T_n$ are not tight, we are lead to consider instead $S_n=n-T_n$.
In this work we consider  a product measure $\P$ over $\C^n$ with marginals identically distributed.
Namely, the marginal of $\P$ is a probability function  over $\C$, which may be finite or countable.
Thas is, the string $\wn$ are generated by independent, identically distributed random variables.
Each of this random variables has a probability distribution defined by a vector of parameters $\theta=(p_\alpha)_{\alpha\in A}$
lying in the parameter space 
$$
\Theta=\{ \theta=(p_\alpha)_{\alpha\in A}\ | \  p_\alpha \ge 0  \ , 
\  \sum_{\alpha\in A} p_\alpha=1 \}\subset (0,1)^A \ .
$$ 

Our main result read as follows:
We present explicit expressions for the probability mass function of $S_n$
and also for its cumulate distribution $\P(S_n \ge k)$.
Moreover we show their  convergence 
to a non-degenerated limiting distribution.

The limiting probability mass function reads $q_k= m_2^{2k}-b_k$
where $m_2$ is the $\ell_2$-norm of the parametric vector $\theta$, namely 
$\sqrt{\sum_{\alpha\in A} p_\alpha^2}$,
and $b_k$ is a smaller order term.
Thus, the limiting distribution has an exponentially decreasing tail.
We observe that, as in the aforementioned case of the large deviation of $S_n$, 
the probability of $S_n$ is also related to the Renyi entropy function $R_H(\beta)$,
in this case at $\beta=1$.
We also present an explicit expression for the correction term $b_k$.
It is also related to the Renyi entropies, this time at
positive integers $\beta$.
We also show that a similar result holds for the cumulated distribution of $S_n$.
As an application, we show that for the uniform (counting) measure,
the limiting measure of the non-overlapping strings ($S_n=0$)
is related to the prime decomposition of the positive integers.

The dynamical definition of $T_n$ (and therefore of $S_n$) allows us to think
that this random variables are defined in the common space of infinite sequences.
Therefore one may ask about other types of convergences. We 
finish the paper showing that
$S_n$ does not converges in probability to any limiting random variable $S$.



This paper is organized as follows. 
In Section 2 we introduce some notation and the basic definitions. 
In Section 3 we present our results and provide some examples. 
Section 4 presents some tools needed for the proofs.
Section 5 presents the proofs of our theorems.
Finally, Section 6 shows that the convergence in distribution of $S_n$ can not be 
extended to convergence in probability.

\section{Notation and definitions}

We consider a probability product measure with identically distributed mar\-gi\-nals 
over  a finite or countable  alphabet ${\mathcal C}$.
 
The symbols of $\C$ are called letters. The set $\C$ which we index by a set $A$.
We put $p_\alpha,\  \alpha\in A$ for the probability of these letters.
To avoid non-interesting cases we assume that $0<p_\alpha<1$ for all $\alpha$.
Thus, the letters are generated by independent identically distributed random variables.

A finite sequence of consecutive letters of length $n$, is called an \emph{$n$-string} or a \emph{word of length $n$} 
and is denoted with the letter $w$, or $w_i$ or even $w_{i,j}$.
When we need to describe specifically the letters of a finite or infinite sequence, namely
$(x_a,\dots,x_b)$ with $x_i \in \C$ and $0 \le a \le b \le \infty$, we write simply by $x_a^b$.

If $w_i$ is a $n_i$-string, $i=1,2,\cdots k$, with  $n=\sum_{i=1}^{k}n_i$
we write
$w_1w_2....w_n$ for the $n$-string which consists in the concatenation 
of the $n_i$-strings $w_1,w_2,....,w_k$.

The object of our analysis is  the following.

\begin{defn} \label{1overlap}
For a given string $\wn\in\C^n$, the \emph{period} or the  \emph{first return} of $\wn$, denoted by $T_n(\wn)$, 
is defined by the first self-overlapping position of the string.  That is,
$T_n: \C^n \go \{1,\dots,n\}$ with
\begin{equation} \label{periodo}
T_n(\wn)=\min\{k\geq 1 | x_1^{n-k}=x_{k+1}^{n}\},
\end{equation}
and $T_n(\wn)=n$ when the above set is empty. 
\end{defn}

The fact that $T_n/n$ converges to one almost surely implies that $T_n $ is not tight,
therefore it is more convenient to consider the
variables $S_n=n-T_n \in \{0,\dots,n-1\}$.
In this case we have that $S_n/n$ converges to zero almost surely.

\begin{defn}
We define  $S_n(x_1^{n})$  as the \emph{maximum size} of the self-overlap, 
among all the self-overlaps of
the string $\wn$. Namely, 
\[
S_n(x_1^{n})=n-T_n(x_1^{n}) \ .
\]
\end{defn}


\newpage
To study the level sets $\{S_n=k\}$ or even the cumulated sets $\{S_n \le k\}$, with $k\in\{0,\dots,n-1\}$ 
we will use as a tool the following sets.

\begin{defn} \label{blocos}
Let $n$ be a positive integer.
For every positive integer $k< n$, $B_n(k)$ denotes the set of strings $x_1^{n}$ such that 
the first block of length $k$ is whatever it is, but then this block is concatenated
until to complete the $n$ symbols. Namely
$$
x_1^{n} = ( \underbrace{ x_1,\dots,x_k }_{1}, \underbrace{ x_1,\dots,x_k }_{2} ,\dots,
\underbrace{x_1,\dots,x_k }_{\lfloor n/k \rfloor}, \underbrace{x_1,\dots,x_r}_{1} ) \ ,
$$
with $0\leq r < k $.  
If $\lfloor n/k \rfloor = n/k $, wich implies that $r=0$, the last string is the empty string.
\end{defn}

We will also use the following definition.

\begin{defn} \label{overlap}
We set $R_n(k)$ as the set of $n$-strings $x_1^{n} \in \C^n$ such that $x_1^{n}$ has \emph{an} overlap of size $k$.
Namely
\[ R_n(k)= \{ \wn \in \C^n \ | \ x_1^{k}=x_{n-(k-1)}^{n}\} . \]
\end{defn}
It is easy to see that the following "duality" holds
\beqn \label{dual}
B_n(n-k)= R_n(k) \qquad \forall k=1,\dots,n-1 \ . 
\eeqn

Finally we put
\[
m_q = \sum_{\alpha \in A} p_\alpha^q \ .
\]
Observe that $m_q^{1/q}$ is the $\L_q$-norm of the parametric vector
$\theta$.
Also we put $\rho=\max\{p_\alpha \ | \ \alpha\in A\}$, namely, the $\L_{\infty}$ norm of $\theta$.

Without lose of generality, we can think that the entries of $\theta$ are disposed in non-decreasing order, say:
$\theta = (p_1,p_2,p_3, \cdots)$, where $\rho = p_1 \geq p_2 \geq p_3 \geq \cdots$.

\section{Results}

In our main theorem we show that the cumulate distribution and the probability mass function 
of $S_n$, for strictly positive integers (namely $k\not=0$),  can be written as a geometric term 
plus a correction term.
The parameter of the geometric term is given by $m_2$.
We show also a similar result for the limiting cumulate and mass distribution functions.
Finally, we present a velocity of convergence for the convergence.

To state precisely our result we need to introduce some quantities that will appear in the theorem 
as correction terms.
The first two are related to the distribution of $S_n$.
The last two are the limits of the previous ones, 
and are related to the limiting distribution of $S_n$. 
Let
\[
a_{k,n} =
 \P\lp \bigcup_{j= \lf n/2 \rf}^{n-1} R_{n}(j)   \backslash \bigcup_{j=k}^{\lf n/2 \rf-1} R_{n}(j)\rp
+\sum_{i=k+1}^{\lf n/2 \rf -1}  \P\lp  R_{2i}(i) \backslash \bigcup_{j=k}^{i-1} R_{2i}(j) \rp    
\ ,
\]
and
$$
b_{k,n} =
 \P\lp \bigcup_{j=\lf n/2\rf}^{n-1} R_{n}(j)  \cap R_{n}(k) \backslash   \bigcup_{j=k+1}^{\lf n/2 \rf-1} R_{n}(j)  \rp
+
\sum_{i=k+1}^{\lf n/2 \rf} \P\lp R_{2i}(i) \cap R_{2i}(k) \backslash \bigcup_{j=k+1}^{i-1} R_{2i}(j)  \rp \ .
$$

Further
\[
a_k=\sum_{i=k+1}^{\infty}  \P \lp R_{2i}(i) \backslash \bigcup_{j=k}^{i-1} R_{2i}(j) \rp     \ ,
\]
and
\[
b_k =\sum_{i=k+1}^{\infty} \P\lp R_{2i}(i) \cap R_{2i}(k) \backslash \bigcup_{j=k+1}^{i-1} R_{2i}(j)  \rp \ .
\]


Now we state our main result.

\begin{teo} \label{distrib}
Let $\P$ be a product measure over $\C^{\Nset}$ with marginals identically distributed.
Then, for all positive integer $k$ and all $n \ge 2k$ 
\begin{itemize}
\item[a)] $\P(S_n\ge k)=  m_2^k + a_{k,n} \ .$
\item[b)] $\P(S_n=k) = m_2^k  - b_{k,n} \ .$
\item[c)]  $\displaystyle\lim_{n\go\infty}\P(S_n\ge k)=  m_2^k + a_{k} \ .$
\item[d)]  $\displaystyle\lim_{n\go\infty}\P(S_n=k) = m_2^k  - b_{k} \ .$
\end{itemize}
Furthermore, for all $2n \ge 4k$ one has 
$\P(S_{2n}\ge k)=\P(S_{2n+1}\ge k)$
and
$\P(S_{2n}= k)=\P(S_{2n+1}= k)$.
\end{teo}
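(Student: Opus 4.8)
The plan is to translate everything into the overlap sets $R_n(j)$ of Definition \ref{overlap} and then exploit a single structural reduction. First I would record the two set identities $\{S_n\ge k\}=\bigcup_{j=k}^{n-1}R_n(j)$ and $\{S_n=k\}=R_n(k)\setminus\bigcup_{j=k+1}^{n-1}R_n(j)$, which are immediate from the fact that $S_n$ is the maximum overlap size: a string lies in $R_n(j)$ exactly when it has an overlap of size $j$, so $S_n\ge k$ means some $j\ge k$ occurs and $S_n=k$ means $k$ occurs but no larger value does. Since an overlap of size $j$ reads $x_1^j=x_{n-j+1}^n$, for $2j\le n$ the two blocks are disjoint and $\P(R_n(j))=m_2^j$; in particular $\P(R_n(k))=m_2^k$ under the hypothesis $n\ge 2k$. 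This already isolates the leading term $m_2^k$ in every part of the statement.

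The crux is the following reduction lemma. Fix $i\le\lf n/2\rf$ and condition on $R_n(i)$, i.e. on $x_1^i=x_{n-i+1}^n$. For any $j<i$ the last $j$ symbols $x_{n-j+1}^n$ then coincide with $x_{i-j+1}^i$, so the event $R_n(j)$ becomes the event $x_1^i\in R_i(j)$, depending only on the first $i$ coordinates. Hence for any event $E\subseteq\C^i$ on $x_1^i$ one has $\P(R_n(i)\cap\{x_1^i\in E\})=\sum_{w\in E}\mu(w)^2$, where $\mu(w)=\prod p_{w_t}$; the middle block $x_{i+1}^{n-i}$ is free and contributes the factor $1$, so this value does not depend on $n$ and in particular equals the same quantity computed at length $2i$. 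This is what converts every ``small-overlap'' piece in length $n$ into its length-$2i$ counterpart, and it also yields the clean bound $\P(R_{2i}(i))=m_2^i$ used later.

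Next I would assemble a) and b). Disjointifying $\bigcup_{j=k}^{n-1}R_n(j)$ in increasing order of $j$ gives $\P(S_n\ge k)=\P(R_n(k))+\sum_{i=k+1}^{n-1}\P(R_n(i)\setminus\bigcup_{j=k}^{i-1}R_n(j))$. I split the sum at $\lf n/2\rf$: the terms with $i\le\lf n/2\rf$ are reducible by the lemma to their length-$2i$ versions and produce the sum appearing in $a_{k,n}$, while the terms with larger overlap recombine into the single probability $\P(\bigcup_{j\ge\lf n/2\rf}R_n(j)\setminus\bigcup_{j=k}^{\lf n/2\rf-1}R_n(j))$, the first summand of $a_{k,n}$; the boundary index $i=\lf n/2\rf$ must be tracked carefully, which accounts for the differing upper limits in $a_{k,n}$ and $b_{k,n}$. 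For b) the same disjointification applied to $R_n(k)\cap\bigcup_{j=k+1}^{n-1}R_n(j)$ gives $\P(S_n=k)=m_2^k-b_{k,n}$, with $b_{k,n}$ the mass of $R_n(k)$ that carries a strictly larger overlap, matching the stated expression term by term.

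Finally, for c) and d) I would let $n\go\infty$: each reducible term is bounded by $\P(R_{2i}(i))=m_2^i$ with $m_2\le\rho<1$, so the sums converge absolutely to $a_k$ and $b_k$, while the first (large-overlap) summands tend to $0$ since $\P(\bigcup_{j\ge\lf n/2\rf}R_n(j))$ is the probability of having a period $\le\lceil n/2\rceil$, which decays (via the duality $R_n(j)=B_n(n-j)$ and a geometric bound on $\P(B_n(p))$, consistent with the exponential-decay results cited in the Introduction). For the parity identities I would use that $\lf 2n/2\rf=\lf(2n+1)/2\rf=n$, so the reducible sums for lengths $2n$ and $2n+1$ are \emph{literally identical}, and it then remains only to show that the two large-overlap terms coincide. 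I expect this last point, together with the decay estimate and the $i=\lf n/2\rf$ boundary bookkeeping, to be the main obstacle: the reduction lemma disposes of the disjoint-block regime effortlessly, but the overlapping-block regime $j>n/2$ is rigid (a period forces further periods, by a Fine--Wilf type argument) and requires a dedicated measure-preserving comparison between lengths $2n$ and $2n+1$.
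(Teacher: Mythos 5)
Your derivation of parts a)--d) is correct, and it follows a genuinely different route from the paper. The paper's engine is Lemma \ref{shrink} (deleting the two or three central letters preserves the measure of the small-overlap union), fed into a telescoping recursion $G_n(k)=G_{n-2}(k)+(\mbox{corrections})$ iterated down to $G_{2k}(k)$, whose collapsing correction terms assemble into (\ref{prob}). Your conditioning lemma --- on $R_n(i)$ with $i\le\lf n/2\rf$ every $R_n(j)$, $j<i$, becomes an event of $x_1^i$, so $\P\lp R_n(i)\cap\{x_1^i\in E\}\rp=\sum_{w\in E}\P(w)^2$ for any $E\subseteq\C^i$, independently of $n$ --- subsumes Lemma \ref{shrink} and lets you disjointify $\{S_n\ge k\}$ over the minimal overlap index in a single pass, with no recursion. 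What this buys is precisely the boundary bookkeeping you flag: done your way, the sum in $b_{k,n}$ may run to $i=\lf n/2\rf$ only if the large-overlap block starts at $j=\lf n/2\rf+1$ (equivalently, stop the sum at $\lf n/2\rf-1$ and start the block at $\lf n/2\rf$). The paper's displayed $b_{k,n}$ (and (\ref{union})) makes both choices at once, so the piece $i=\lf n/2\rf$ is counted twice; for the fair two-letter alphabet with $k=1$, $n=4$ it yields $\P(S_4=1)=1/4$ instead of the true $3/8$. Your bookkeeping is the correct one. Similarly, your decomposition shows that a) requires $k\le\lf n/2\rf-1$, i.e.\ $n\ge2k+2$, not merely $n\ge2k$ (at $n=2k$ the term $m_2^k$ is counted twice). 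The limits c), d) are handled identically in both proofs: geometric domination of the reduced terms plus decay of the large-overlap block, which is the paper's Lemma \ref{small} and which your duality sketch reproduces.

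You were also right to stop at the ``furthermore'' parity claim, and right that it is the real obstacle: after the reduction the two sums at lengths $2n$ and $2n+1$ coincide, and everything hinges on the equality of the two large-overlap probabilities. The paper does not prove this either; it is dispatched with ``the above argument changing $n/2$ by $\lf n/2\rf$ holds'', which gives the formula for odd lengths but says nothing about the boundary terms. In fact no argument can close this gap for general $k$, because the claim is false for $k\ge2$: for the uniform measure on two letters a direct count gives $\P(S_8\ge2)=100/256$ while $\P(S_9\ge2)=105/256$. The culprits are words such as $011001100$ (period $4$, length $9$), whose overlaps have sizes $5$ and $1$ only, so they enter the length-$9$ boundary term, while at length $8$ any word with an overlap of size $\ge5$ (period $\le3$) automatically has an overlap of size $2$ and is excluded. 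The claim does hold for $k=1$, where both boundary sets admit the explicit description $\{vv\}$, respectively $\{vcv\}$, with $v$ unbordered and $c$ a free letter, hence have the same measure $\sum_{v\in\{S_m=0\}}\P(v)^2$; and $k=1$ is the only case the paper actually uses later (Remark \ref{selfavoid}, Example \ref{unif}). So the gap you identified is a defect of the statement itself, not of your approach.
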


The next corollary establishes what is the measure and the limiting measure of the 
set of strings with non-overlap, or simply the set of "self-avoiding words" \cite{RoTo}.

\begin{cor} \label{nonoverlap}
Under the hypothesis of Theorem \ref{distrib}
one has
\begin{itemize}
\item[a)] $\P(S_n = 0)=  1-m_2 - a_{1,n} \ , \ \forall n \ge 2$.

\item[b)]  
$
\displaystyle \lim_{n\go\infty}\P(S_n=0) = 1- m_2- \displaystyle\sum_{i=2}^{\infty}  \ \sum_{w \in \{S_{i}=0\} } \P(w)^2 > (1- p_1)(1-m_2) \ . 
$

\end{itemize}
Furthermore, the sequence $(\P(S_{2n} = 0))_{n \in \N}$ is decreasing.
More precisely
\[
\P(S_{2n}=0) = \P(S_{2n-2}=0) - \sum_{w \in \{S_{n}=0\}} \P(w)^2 \ .
\]
\end{cor}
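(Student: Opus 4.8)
The plan is to read everything off Theorem \ref{distrib} with $k=1$, after reinterpreting the correction terms $a_{1,n}$ and $a_1$ as sums of squared weights of words with no self-overlap. Since $S_n$ takes values in $\{0,\dots,n-1\}$, one has $\P(S_n=0)=1-\P(S_n\ge 1)$, so Theorem \ref{distrib}(a) with $k=1$ gives part (a) at once, and Theorem \ref{distrib}(c) gives $\lim_n\P(S_n=0)=1-m_2-a_1$. To match the stated formula I will compute $a_1$ from its definition via the set identity
\[
R_{2i}(i)\setminus\bigcup_{j=1}^{i-1}R_{2i}(j)=\{ww\ :\ w\in\C^i,\ S_i(w)=0\}.
\]
This holds because every element of $R_{2i}(i)$ is a concatenation $ww$ with $w=x_1^i$, and because for $1\le j\le i$ one has $ww\in R_{2i}(j)$ exactly when $w_1^j=w_{i-j+1}^i$, i.e. when $w\in R_i(j)$; hence deleting all small overlaps of $ww$ amounts to requiring that $w$ have no self-overlap. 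As $\P$ is a product measure, $\P(ww)=\P(w)^2$, so $\P(R_{2i}(i)\setminus\bigcup_{j=1}^{i-1}R_{2i}(j))=\sum_{w\in\{S_i=0\}}\P(w)^2$ and therefore $a_1=\sum_{i\ge 2}\sum_{w\in\{S_i=0\}}\P(w)^2$, which is the equality in (b).

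For the strict inequality in (b) it suffices to prove $a_1<p_1(1-m_2)$. Any $w$ with $S_i(w)=0$ and $i\ge 2$ satisfies $w_1\ne w_i$ (otherwise $w\in R_i(1)$), so
\[
\sum_{w\in\{S_i=0\}}\P(w)^2\ \le\ \sum_{\substack{w\in\C^i\\ w_1\ne w_i}}\P(w)^2=(m_2^2-m_4)\,m_2^{\,i-2},
\]
and this is strict for $i=4$, since for any two distinct letters $a\ne b$ the word $abab$ has positive weight and $w_1\ne w_4$ yet does overlap. Summing the geometric series yields $a_1<(m_2^2-m_4)/(1-m_2)$. I will then use $m_2^2-m_4=\sum_{\alpha\ne\beta}(p_\alpha p_\beta)^2\le p_1p_2\sum_{\alpha\ne\beta}p_\alpha p_\beta=p_1p_2(1-m_2)$ together with $p_2\le 1-m_2$ (which follows from $m_2=\sum_\alpha p_\alpha^2\le p_1\sum_\alpha p_\alpha=p_1$, whence $1-m_2\ge 1-p_1\ge p_2$) to conclude $a_1<p_1p_2\le p_1(1-m_2)$. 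Thus $\lim_n\P(S_n=0)=1-m_2-a_1>1-m_2-p_1(1-m_2)=(1-p_1)(1-m_2)$.

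For the recursion I will identify the finite correction $a_{1,n}$ with a partial sum of the series for $a_1$. Evaluating the definition of $a_{k,n}$ at $k=1$ and length $2n$ (so the floor $\lfloor 2n/2\rfloor$ equals $n$), the sum over $i=2,\dots,n-1$ contributes $\sum_{i=2}^{n-1}\sum_{w\in\{S_i=0\}}\P(w)^2$ by the identity above, while the first term supplies the missing $i=n$ summand through the analogous identity
\[
\bigcup_{j=n}^{2n-1}R_{2n}(j)\setminus\bigcup_{j=1}^{n-1}R_{2n}(j)=\{ww\ :\ w\in\C^n,\ S_n(w)=0\}.
\]
Hence $a_{1,2n}=\sum_{i=2}^{n}\sum_{w\in\{S_i=0\}}\P(w)^2$, and subtracting the same expression at $2n-2$ gives $a_{1,2n}-a_{1,2n-2}=\sum_{w\in\{S_n=0\}}\P(w)^2$; since $\P(S_{2n}=0)=1-m_2-a_{1,2n}$, this is exactly the asserted recursion, whose subtracted term is strictly positive (words of length $n$ with no self-overlap have positive weight), so $(\P(S_{2n}=0))_n$ decreases.

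The main obstacle is the last displayed identity. Unlike the clean case $R_{2i}(i)$, the union $\bigcup_{j=n}^{2n-1}R_{2n}(j)$ a priori also contains strings with overlaps strictly larger than $n$, so I must show that after removing every overlap smaller than $n$ only genuine squares $ww$ with unbordered root survive. Here a periodicity argument is needed: reading an overlap of size $j$ as a period $2n-j$, a string in the left-hand set has some period $\le n$ and no period in $[n+1,2n-1]$; if its smallest period $p$ were $<n$, then (multiples of a period being periods) a suitable multiple $mp$ would land in the forbidden window $[n+1,2n-1]$, a contradiction, forcing $p=n$ and the square form, after which ``no overlap of size $<n$'' translates precisely into $w$ having no self-overlap.
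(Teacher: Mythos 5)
Your proof is correct, and while parts (a) and the equality in (b) coincide with the paper's own argument (both read Theorem \ref{distrib} at $k=1$ and identify $R_{2i}(i)\setminus\bigcup_{j=1}^{i-1}R_{2i}(j)$ with the set of squares $ww$ where $S_i(w)=0$), you handle the remaining two claims by a genuinely different route. For the recursion, the paper never computes $a_{1,n}$: it re-runs the telescoping decomposition on $G_n(1)=\P(S_n\ge 1)$, invoking Lemma \ref{blocks2} to cut the union of overlaps at $n/2$ and Lemma \ref{shrink} to identify $\P\lp\bigcup_{j=1}^{n/2-1}R_n(j)\rp$ with $G_{n-2}(1)$, the leftover term being $\sum_{w\in\{S_{n/2}=0\}}\P(w)^2$. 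You instead evaluate $a_{1,2n}$ in closed form as the partial sum $\sum_{i=2}^{n}\sum_{w\in\{S_i=0\}}\P(w)^2$ and take differences; the periodicity argument you give for $\bigcup_{j=n}^{2n-1}R_{2n}(j)\setminus\bigcup_{j=1}^{n-1}R_{2n}(j)=\{ww \mid S_n(w)=0\}$ is sound (smallest period $\le n$, multiples of a period are periods, so a period $<n$ would force one in the forbidden window), and it is in substance a re-proof of Lemma \ref{blocks2}, from which your identity also follows in one line; your route has the advantage of bypassing Lemma \ref{shrink} entirely and making the monotone convergence $a_{1,2n}\uparrow a_1$ transparent. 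For the strict inequality in (b), the paper iterates its recursion, bounds $\P(w)\le p_1^{n-1}p_2$ for non-overlapping $w$, and sums a geometric series in $p_1$; as written, that chain terminates in the non-strict bound $\lim_n\P(S_n\ge 1)\le m_2+p_1(1-m_2)$, so the strictness asserted in the statement is not actually delivered there. Your direct estimate $a_1<(m_2^2-m_4)/(1-m_2)\le p_1p_2\le p_1(1-m_2)$, with strictness certified by the words $abab$ at $i=4$ and with $p_2\le 1-p_1\le 1-m_2$ justified by $m_2\le p_1$, is more elementary, avoids the recursion altogether, and is the only one of the two arguments that genuinely yields the strict inequality claimed in the corollary.
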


\begin{rem} \label{selfavoid}
By the last statement of Theorem \ref{distrib},
$\P(S_{2n+1} = 0)=\P(S_{2n} = 0)$ for all $n$. 

\end{rem}

The next theorem provides the exponential rate of convergence of our main theorem.

\begin{teo} \label{bounds}
For every non-negative integer $k$ and every positive integer $n \ge 4k$ the following inequalities hold 
\begin{itemize}
\item[a)]
$
\lv \P(S_n = k)   - \lim_{n\go\infty}\P(S_n=k) \rv 
\le C m_2^{n/2} \lp \frac{m_3}{m_2^{3/2}} \rp^k \ ,
$
\item[b)]
$
\lv \P(S_n \ge k)   - \lim_{n\go\infty}\P(S_n \ge k) \rv 
\le C m_2^{n/2} \lp \frac{m_3}{m_2^{3/2}} \rp^k \frac{m_2^{3/2}}{m_3-m_2^{3/2}}
\ ,
$
\\
\end{itemize}
where $C$ is a positive constant (that depends only on vector $\theta$).
\end{teo}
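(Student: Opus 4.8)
The plan is to read the two differences straight off Theorem \ref{distrib}. For every $k\ge 1$ one has $\P(S_n=k)-\lim_n\P(S_n=k)=b_k-b_{k,n}$ and $\P(S_n\ge k)-\lim_n\P(S_n\ge k)=a_{k,n}-a_k$. I would first reduce (b) to (a). Writing $\P(S_n\ge k)=\sum_{l\ge k}\P(S_n=l)$ and the matching identity for the limit gives $a_{k,n}-a_k=\sum_{l=k}^{\infty}(b_l-b_{l,n})$, so that (b) follows from the pointwise bound in (a) by summing a geometric series of ratio $m_3/m_2^{3/2}<1$ (this ratio is $<1$ since $m_3^{1/3}\le m_2^{1/2}$); this is exactly what produces the extra factor $m_2^{3/2}/(m_2^{3/2}-m_3)$ in (b), so I read the sign in the statement as a typo. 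The finitely many levels $l$ with $n<4l$ are absorbed by the same $F,G$ analysis below, both sides being small there. Finally, by the parity statement of Theorem \ref{distrib} I may assume $n$ even, so $\lf n/2\rf=n/2$.

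It remains to prove (a). Comparing the two definitions, the finite sum $\sum_{i=k+1}^{n/2}$ inside $b_{k,n}$ cancels the initial segment of the infinite series defining $b_k$, leaving
\[
b_k-b_{k,n}=G-F ,
\]
where $F$ is the first (``large overlap'') term of $b_{k,n}$ and $G=\sum_{i=n/2+1}^{\infty}\P\lp R_{2i}(i)\cap R_{2i}(k)\bs\bigcup_{j=k+1}^{i-1}R_{2i}(j)\rp$ is the tail of the series for $b_k$. Both $F$ and $G$ are nonnegative, so $|b_k-b_{k,n}|\le\max(F,G)$, and it suffices to bound each by $C\,m_2^{n/2}(m_3/m_2^{3/2})^k$.

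For $G$ I would first identify each summand combinatorially: a string of $R_{2i}(i)$ is a concatenation $(w,w)$ with $w\in\C^{i}$, and for such strings the events $R_{2i}(k)$ and $R_{2i}(j)$ become the overlap events $R_i(k)$, $R_i(j)$ for the half-word $w$; hence the summand equals $\sum_{w:\,S_i(w)=k}\P(w)^2$. Using $\{S_i=k\}\subseteq R_i(k)$ together with the fact that for $i>2k$ the two overlapping blocks of $R_i(k)$ are disjoint, a direct product computation gives $\sum_{w\in R_i(k)}\P(w)^2=m_4^{\,k}m_2^{\,i-2k}$. Summing the geometric series over $i>n/2$ yields $G\le C\,m_2^{n/2}(m_4/m_2^2)^k$, and the elementary inequalities $m_4\le\rho\,m_3\le m_2^{1/2}m_3$ (so $m_4/m_2^2\le m_3/m_2^{3/2}$) put this in the stated form.

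The main obstacle is the term $F$. Dropping the removed intermediate overlaps only enlarges the set, so $F\le\P\lp\bigcup_{j=n/2}^{n-1}R_n(j)\cap R_n(k)\rp$. Here I would use the duality (\ref{dual}), $R_n(j)=B_n(n-j)$, to turn ``overlap $\ge n/2$'' into ``period $p\le n/2$'', and then estimate $\P(B_n(p)\cap R_n(k))$ through the periodic decomposition of Definition \ref{blocos}. The dominant contribution is $p=n/2$, where again the string is $(u,u)$ with $u\in R_{n/2}(k)$ and the probability is exactly $m_4^{\,k}m_2^{\,n/2-2k}$, matching $G$. The delicate point is to show that no smaller period yields a larger contribution and that the sum over all $p\le n/2$ converges to $C\,m_2^{n/2}$ uniformly in $n$: this is where the period combinatorics (a Fine--Wilf type constraint on strings that simultaneously carry a small overlap $k$ and a large overlap $\ge n/2$) enters, and I expect it to rest on the large-overlap decay estimates prepared in Section~4. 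The borderline case $k=0$ is not covered by parts (b),(d) of Theorem \ref{distrib}; there I would instead invoke Corollary \ref{nonoverlap}, which gives $\P(S_n=0)-\lim_n\P(S_n=0)=a_1-a_{1,n}$, already controlled by the bound (b) at $k=1$.
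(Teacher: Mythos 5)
Your setup matches the paper's: both reduce the problem to $|b_{k,n}-b_k|\le\max(F,G)$, your treatment of the tail $G$ (via $\P(R_{2i}(i)\cap R_{2i}(k))=m_4^k m_2^{i-2k}$ and $m_4\le\rho m_3\le m_2^{1/2}m_3$) is exactly the paper's, and your derivation of (b) from (a) by summing a geometric series in $k$ --- including the observation that the denominator in the statement should read $m_2^{3/2}-m_3$ --- is also the paper's route. The genuine gap is in the term you yourself call the main obstacle, $F$. You discard the subtracted union $\bigcup_{j=k+1}^{n/2-1}R_n(j)$ and are then forced to control $\sum_{p\le n/2}\P(B_n(p)\cap R_n(k))$ over \emph{all} periods $p$; this you do not do, deferring to ``period combinatorics'' and to the estimates of Section~4. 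That deferral fails on both counts. Lemma \ref{small} only yields $\lf n/2\rf m_2^{n/2}$, which carries an extra factor of $n$ and no decay in $k$, so it cannot produce the stated bound. Moreover your identification of the dominant period is wrong: it is not $p=n/2$ (value $m_4^k m_2^{n/2-2k}$) but, whenever $n\equiv k\ (\mathrm{mod}\ 2)$, the period $p=(n-k)/2$, which divides $n-k$, so the overlap-$k$ condition is automatically satisfied by periodicity and $\P(B_n(p)\cap R_n(k))=\P(B_n(p))= m_3^k m_2^{(n-3k)/2}$ --- strictly larger than your candidate (since $m_4\le m_3 m_2^{1/2}$) and in fact exactly of the order of the claimed bound, so it cannot be treated as negligible. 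Controlling the sum over all $p$ therefore requires a case analysis on $(n-k)\bmod p$ (whether the overlap identifications are redundant or not), and nothing in your proposal supplies it.

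The paper's way around this is precisely the step you skipped: it keeps the subtracted union and exploits the containment $B_n(j)\subset B_n(l)$ for $l$ a multiple of $j$, which shows that (the dual of) the subtracted union already contains $\bigcup_{j=1}^{n/2-k}B_n(j)$. Hence only the $k$ periods $j\in\{n/2-k+1,\dots,n/2\}$ survive, and for each of these the string factorizes explicitly as $w=w_1w_2w_1w_2w_3w_1$ with $|w_1|=k$, giving $\P(B_n(j)\cap R_n(k))\le m_3^k m_2^{j-k}\rho^{n-k-2j}$ in one subrange and $m_3^k m_2^{n-2k-j}\rho^{2j+k-n}$ in the other, whose geometric sums are $C_\theta\, m_2^{n/2}(m_3/m_2^{3/2})^k$. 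Unless you either reproduce this use of the subtracted union or carry out the full period/divisibility analysis over all $p\le n/2$, your bound on $F$ --- and with it part (a), on which your part (b) rests --- remains unproven. (Your handling of $k=0$ via Corollary \ref{nonoverlap} is a sensible supplement; the parity reduction and the remark about levels $l>n/4$ in the summation for (b) are fine, though the latter is as unaddressed in your text as it is in the paper.)
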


The next proposition presents bounds for $a_{k,n},b_{k,n},a_{k},b_{k}$.

\begin{prop} \label{secondorder}
Under the hipothesis of Theorem \ref{distrib}
one has
\begin{itemize}
\item[a)] $a_{k,n} \le (m_2^{k+1}-m_2^{n})/(1-m_2)\ .$
\item[b)] $b_{k,n} 
\le 
(m_4^{k+1})/(1-m_2) + \lp 2m_2^{n/2+1}/(m_2-\rho^2)\rp   (m_3/m_2^{3/2}) ^k \ .$

\item[c)] $a_{k} \le m_2^{k+1}/(1-m_2)\ .$
\item[d)] $b_{k} 
\le 
 m_4^{k+1}/(1-m_2) \ . $

\end{itemize}
\end{prop}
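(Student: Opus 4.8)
The plan is to reduce each of the four quantities to a sum of probabilities of the elementary sets $R_m(\cdot)$ and $R_{2i}(i)\cap R_{2i}(k)$, to evaluate those probabilities, and then to collapse the resulting expressions into geometric series. Two computations carry the whole argument. For a plain overlap set one has $\P(R_m(j))=m_2^{\,j}$ when $2j\le m$, because the $j$ equalities $x_t=x_{m-j+t}$ bind $2j$ distinct coordinates into $j$ matching pairs, each worth $m_2$; when $2j\ge m$ the duality $R_m(j)=B_m(m-j)$ applies, and a string of period $p=m-j\le m/2$ is determined by $p$ phases, the $\ell$-th occurring $c_\ell\ge 2$ times, so that $\P(B_m(p))=\prod_{\ell=1}^{p}m_{c_\ell}\le m_2^{\,p}$ since $m_q$ is non-increasing in $q$. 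For the squared-measure sets I will use $\P(R_{2i}(i)\cap R_{2i}(k))=\sum_{w\in R_i(k)}\P(w)^2$; passing to the auxiliary product measure with marginals $p_\alpha^2/m_2$ this equals $m_4^{\,k}m_2^{\,i-2k}$ for $i\ge 2k$, and for $k<i<2k$ the set equals $B_i(i-k)$, whose squared measure the moment inequality $m_{2c}\le m_4\,m_2^{\,c-2}$ (itself a consequence of $\rho^2\le m_2$) bounds by $m_4^{\,i-k}m_2^{\,2k-i}$.

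I would settle the two $m_2$-bounds first. The key reduction is that $R_{2i}(i)\setminus\bigcup_{j=k}^{i-1}R_{2i}(j)=\{ww:\ S_i(w)<k\}$, so $a_k=\sum_{i\ge k+1}\sum_{S_i(w)<k}\P(w)^2\le\sum_{i\ge k+1}m_2^{\,i}=m_2^{\,k+1}/(1-m_2)$, which is (c); here one simply drops the overlap restriction. Part (a) is the finite-$n$ refinement: the second sum supplies $\sum_{i=k+1}^{\lfloor n/2\rfloor-1}m_2^{\,i}$, while the long-overlap term must supply the missing tail $\sum_{j=\lfloor n/2\rfloor}^{n-1}m_2^{\,j}$, after which the two pieces telescope into $(m_2^{\,k+1}-m_2^{\,n})/(1-m_2)$. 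The subtracted medium-overlap union is essential here: a string of minimal period $p$ carries overlaps $n-p,n-2p,\dots$, and only those with $p$ in the short window $\bigl((n-k)/2,\lceil n/2\rceil\bigr]$ fail to place an overlap in $[k,\lfloor n/2\rfloor-1]$; each surviving period contributes $m_3^{\,n-2p}m_2^{\,3p-n}=m_2^{\,n/2}(m_3/m_2^{3/2})^{\,n-2p}$, so the tail is genuinely of size $m_2^{\,n/2}$.

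For (b) and (d) the extra factor $R_\cdot(k)$ inserts a border of length $k$. In the sum for $b_k$ the identity $R_{2i}(i)\cap R_{2i}(k)\setminus\bigcup_{j=k+1}^{i-1}R_{2i}(j)=\{ww:\ S_i(w)=k\}$ turns it into $\sum_{i\ge k+1}\sum_{S_i(w)=k}\P(w)^2$, and feeding in the two squared-measure estimates through $\{S_i=k\}\subseteq R_i(k)$ produces a geometric series in $m_4$, namely the first term of (b) and the whole of (d). The delicate object is the long-overlap piece of $b_{k,n}$. Decomposing $\bigcup_{j\ge\lfloor n/2\rfloor}R_n(j)$ by the period $p=n-j$ and intersecting with $R_n(k)$, the border forces at each of its $k$ matched coordinates a third occurrence of an already-doubled phase, so each such triple coincidence trades a factor $m_2^{3/2}$ for $m_3$, generating $(m_3/m_2^{3/2})^{k}$; the at-least-double periodic repetitions give the baseline $m_2^{\,n/2}$, and summing over the admissible periods is geometric with ratio $\rho^2/m_2$, which is the origin of the factor $1/(m_2-\rho^2)$ and of the stated bound $\tfrac{2m_2^{\,n/2+1}}{m_2-\rho^2}(m_3/m_2^{3/2})^{k}$. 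Both series converge because $m_3\le m_2^{3/2}$ (the $\ell_3\!\le\!\ell_2$ norm inequality) and $\rho^2\le m_2$.

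I expect this long-overlap-with-border estimate to be the main obstacle: it is the only place where one cannot merely discard the subtracted union, where the three moments $m_2,m_3,\rho$ enter at once, and where the interaction between the border length $k$ and the admissible periods has to be tracked coordinate by coordinate rather than by a crude union bound. Once it is in hand, (d) follows from (b) and (c) from (a) by letting $n\to\infty$, since the $m_2^{\,n/2}$ contribution then disappears; thus the finite-$n$ estimates are the real content and their limits are immediate.
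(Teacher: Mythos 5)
Your part (c) is correct, and in fact cleaner than the paper's own treatment: the identity $R_{2i}(i)\setminus\bigcup_{j=k}^{i-1}R_{2i}(j)=\{ww \,:\, S_i(w)<k\}$ together with Lemma \ref{newton} gives $a_k\le\sum_{i\ge k+1}m_2^i=m_2^{k+1}/(1-m_2)$ in one line, where the paper splits the sum at $i=2k$ for no gain. The first genuine gap is in (d), and hence in the first term of (b). You assert that inserting $\{S_i=k\}\subseteq R_i(k)$ and your two squared-measure estimates ``produces a geometric series in $m_4$, namely \dots the whole of (d)'', but what they actually produce is $b_k\le\sum_{i=k+1}^{2k-1}m_4^{i-k}m_2^{2k-i}+m_4^k/(1-m_2)$: the summand at $i=k+1$ is $m_4m_2^{k-1}$, and the tail alone is $m_4^k/(1-m_2)$, both exponentially larger than the claimed $m_4^{k+1}/(1-m_2)$. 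No sharpening can close this, because (d) is false as stated: for $k=1$ the $i=2$ summand of $b_1$ is exactly $\P(R_4(2)\cap R_4(1))=m_4$ (the subtracted union is empty there), so $b_1\ge m_4$, which contradicts $b_1\le m_4^2/(1-m_2)$ whenever $m_2+m_4<1$ --- for the fair coin this reads $1/8$ against a claimed bound of $1/32$. The paper's own proof carries the same defect (its term $II$ equals $m_4^k/(1-m_2)$, already above the stated bound), so the exponent that is actually provable by either route is $k$, not $k+1$.

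The second gap is in (a). The step you need is that the long-overlap term is at most the missing tail $\sum_{j=\lfloor n/2\rfloor}^{n-1}m_2^j$, but the estimate your window analysis delivers is $m_2^{n/2}\sum_{0\le t<k}(m_3/m_2^{3/2})^t$, i.e.\ a constant $m_2^{3/2}/(m_2^{3/2}-m_3)$ where the telescoping requires $1/(1-m_2)$. These are not comparable: for a two-letter law $(1-q,q)$ with $q$ small the first grows like $q^{-2}$ while the second is of order $q^{-1}$, so your bound overshoots the allowance as soon as $k$ is of order $1/q$; and the statement itself fails in that regime (take $n=2k+2$ and $k\gtrsim q^{-2}$: the strings containing exactly two occurrences of the rare letter, spaced so as to create one long overlap and no overlap in $[k,\lfloor n/2\rfloor-1]$, already carry total mass of order $k^2q^2(1-q)^{n-2}$, which exceeds $m_2^{k+1}/(1-m_2)$). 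The paper dodges this point only by asserting $\P(R_n(j))=m_2^j$ for $j\ge n/2$, which is false for non-uniform marginals ($\P(R_n(n-1))=m_n\gg m_2^{n-1}$), exactly as your own preliminary bound $\P(B_n(p))\le m_2^p$ reflects; note also that your list of surviving periods is incomplete when $k>n/4$, which the hypothesis $n\ge 2k$ permits. Finally, the one estimate that is both true and delicate --- the long-overlap part of $b_{k,n}$, worth $Cm_2^{n/2}(m_3/m_2^{3/2})^k$ --- is only described, not derived, in your proposal; the paper proves it inside Theorem \ref{bounds} by factorizing $\P(B_n(j)\cap R_n(k))\le m_3^km_2^{j-k}\rho^{n-k-2j}$ and summing the geometric series in $j$ with ratio $\rho^2/m_2$, and that computation cannot be waved through.
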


\vskip.5cm
The bounds in the proposition above
do not establishes which one is the leading term 
between $m_2^k$ and $a_{k,n}$ or $a_{k}$
in Theorem \ref{distrib}.
The next proposition shows that actually, both situations can happen.
(It is obvious that $m_2^k \ge \max_{n \ge 2k}\{b_{k,n}, b_k\}$.)

The next proposition shows us that the bound presented in  Proposition \ref{secondorder}$c)$ is sharp. Moreover, it shows that, if $m_2 \leq 1/2$, $ (m_2^k)_{k\in \N}$ is the leading term. If $m_2 > 1/2$, the sequence $(m_2^k)_{k\in \N}$ starts above the sequence $(a_k)_{k\in \N}$, and then its tail becomes strictly smaller.
 
\begin{prop} \label{leadterm}
Under the conditions of heorem \ref{distrib}, there exists $A(k)$(that satisfies: $\lim_{k \to \infty}A(k) = 0$) such that $a(k)\geq m_2^{k+1}/ (1-m_2) - A(k)$. Furthermore \\
\begin{itemize}
\item[a)] If $m_2 \leq 1/2$, then $m_k > a_k$ for all $k\in \N$.
\item[b)] If $m_2 > 1/2$, then 
\begin{enumerate}
\item $a_1 < m_2$
\item There exists some $k_0 > 0$, for wich $a_k > m_2^k$, for all $k>k_0$.
\end{enumerate}
\end{itemize}
\end{prop}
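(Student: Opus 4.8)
The plan is to reduce everything to a single exact identity for $a_k$. First I would establish the recursion
\[
a_k - a_{k+1} = m_2^{k+1} - b_k \qquad (k \ge 1)
\]
by comparing the defining sums of $a_k$ and $a_{k+1}$ term by term. Writing $\bigcup_{j=k}^{i-1}R_{2i}(j) = R_{2i}(k)\cup\bigcup_{j=k+1}^{i-1}R_{2i}(j)$ and using $\P(E\bs F)=\P(E)-\P(E\cap F)$ splits each summand of $a_k$ (for $i\ge k+2$) into the corresponding summand of $a_{k+1}$ minus the corresponding summand of $b_k$; after isolating the extremal term $i=k+1$, whose subtracted union is empty, and using $\P\lp R_{2(k+1)}(k+1)\rp=m_2^{k+1}$, the identity drops out. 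Since $a_\ell\le m_2^{\ell+1}/(1-m_2)\go 0$ by Proposition \ref{secondorder}c), the telescoping sum $a_k=\sum_{\ell\ge k}(a_\ell-a_{\ell+1})$ converges and gives the exact expression
\[
a_k = \frac{m_2^{k+1}}{1-m_2} - A(k), \qquad A(k):=\sum_{\ell=k}^{\infty} b_\ell .
\]
The bound $b_\ell\le m_4^{\ell+1}/(1-m_2)$ of Proposition \ref{secondorder}d) then yields $A(k)\le m_4^{k+1}/[(1-m_2)(1-m_4)]$, so $A(k)\go 0$ geometrically (as $m_4<1$); this proves the first displayed claim and exhibits the sharpness of Proposition \ref{secondorder}c). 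I would also record that $A(k)>0$: the extremal term $i=\ell+1$ in $b_\ell$ carries no subtracted union and already contains the constant strings, so $b_\ell\ge m_{2\ell+2}>0$.

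For part a), assume $m_2\le 1/2$. Then $m_2/(1-m_2)\le 1$, hence $m_2^{k+1}/(1-m_2)\le m_2^k$, and combining with $A(k)>0$ gives $a_k < m_2^{k+1}/(1-m_2)\le m_2^k$ for every $k\in\N$, which is the asserted strict inequality.

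For part b), assume $m_2>1/2$. For b1) I would bypass the correction term: by Theorem \ref{distrib}c) with $k=1$, $\lim_{n\go\infty}\P(S_n\ge 1)=m_2+a_1\le 1$, so $a_1\le 1-m_2$; since $m_2>1/2$ forces $1-m_2<m_2$, this gives $a_1<m_2$. For b2) I would compare the two estimates quantitatively: $a_k>m_2^k$ is equivalent to $m_2^k\,(2m_2-1)/(1-m_2) > A(k)$, whose left side is a strictly positive constant times $m_2^k$, while $A(k)\le C\,m_4^k$. As $m_4<m_2$ (because $p_\alpha^4<p_\alpha^2$ for each $0<p_\alpha<1$), the ratio $(m_2/m_4)^k\go\infty$, so the inequality holds for all $k$ beyond some $k_0$, which is exactly b2).

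The main obstacle is the first step: getting the recursion $a_k-a_{k+1}=m_2^{k+1}-b_k$ right requires careful inclusion–exclusion bookkeeping of the nested set differences $R_{2i}(i)\bs\bigcup_{j}R_{2i}(j)$ and correct treatment of the boundary index $i=k+1$, where the subtracted union degenerates to the empty set. Once this identity is in hand, parts a), b1) and b2) are short consequences of the explicit formula for $a_k$ together with the elementary facts $m_4<m_2<1$ and the bounds of Proposition \ref{secondorder}.
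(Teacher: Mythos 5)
Your proof is correct, but it follows a genuinely different route from the paper's. The paper works directly from the definition of $a_k$: it expands each term as $\P(R_{2i}(i)) - \P\lp R_{2i}(i)\cap\bigcup_j R_{2i}(j)\rp$, applies a union bound and Fubini, splits the resulting double sum into the regimes $i<2j$ and $i\ge 2j$, and computes the intersection probabilities explicitly (e.g.\ $\P(R_{2i}(i)\cap R_{2i}(j)) = m_4^j m_2^{i-2j}$ for $i \ge 2j$), arriving at an explicit $A(k)=\frac{m_4^k}{1-m_4}\lp m_4+\frac{1}{1-m_2}\rp$. You instead exploit the structural consistency of Theorem \ref{distrib}: your recursion $a_k-a_{k+1}=m_2^{k+1}-b_k$ is exactly what one gets by subtracting parts c) and d) of that theorem via $\P(S_n=k)=\P(S_n\ge k)-\P(S_n\ge k+1)$ (your term-by-term set-theoretic derivation is also valid, though note the subtracted union that degenerates to the empty set at $i=k+1$ is the one in the $b_k$ term, not the $a_k$ term). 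Telescoping then gives the \emph{exact} identity $a_k=m_2^{k+1}/(1-m_2)-\sum_{\ell\ge k}b_\ell$, after which Proposition \ref{secondorder}d) finishes the job. Your approach buys brevity, an identity rather than a one-sided bound, and two small improvements over the paper: your part a) handles the boundary case $m_2=1/2$ with a genuine strict inequality (via $b_\ell\ge m_{2\ell+2}>0$), where the paper's citation of Proposition \ref{secondorder}c) alone only yields $a_k\le m_2^k$; and your b1) needs only that $m_2+a_1$ is a limit of probabilities, rather than the strict inequality of Corollary \ref{nonoverlap}b). What the paper's computation buys in exchange is self-containedness and a closed-form $A(k)$ that does not depend on having already established the mass-function side of Theorem \ref{distrib} and Proposition \ref{secondorder}d).
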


\subsection{Examples}

To exemplify the behavior of 
$m_2, a_{k,n}, b_{k,n}, a_{k}, b_{k}$
we present some examples.

\begin{ex}[Two letters alphabet]
Consider the case where $\C=\{0,1\}$ and $\theta=(p,1-p) = (p_1,p_2)$, where $p_1=\rho$.
Then, $m_i= p_1^i+p_2^i$, for $i \in \N$, and the inequalities given by Proposition \ref{secondorder}
become

\begin{description}

\item[a)] $a_{k,n} \leq  \dfrac{(p_1^2 +p_2^2)^{k+1}+ (p_1^2 + p_2^2)^n}{1-p_1^2-p_2^2}$

\item[b)] $b_{k,n} \leq  \dfrac{(p_1^4 + p_2^4)^{k+1}}{1-p_1^2-p_2^2} + \dfrac{2(p_1^2 + p_2^2)^{n+1}}{p_2^2} \lp \dfrac{p_1^3+p_2^3}{(p_1^2+p_2^2)^{3/2}} \rp ^k  .$

\item[c)] $a_k \leq \dfrac{(p_1^2 +p_2^2)^{k+1}}{1-p_1^2-p_2^2}.$

\item[d)] $b_k \leq \dfrac{(p_1^4 + p_2^4)^{k+1}}{1-p_1^2-p_2^2}$.

In $c)$, notice that if $p > 1/2$, then $m_2>1/2$, and item $b)2.$ of Proposition \ref{leadterm} ($a_k > m_2^k$) holds for all $k>k_0$, where $k_0 = |\log{\frac{(1-p_1^4-p_2^4)^2}{2(p_1^2+p_2^2)-1}}| / |\log{\frac{p_1^4+p_2^4}{p_1^2+p_2^2}}|$.

\end{description}
\end{ex}

\begin{ex}[Uniform measure] \label{unif}
In this example we consider a uniform product measure 
over the finite alphabet  $\C=\{1,\dots,s\}$, so that $\theta=(1/s,\dots,1/s)$.
Then, $m_i= s \left(1/s^i\right)= 1/s^{i-1}$. Thus $m_i=1/s^{i-1}$.
The inequalities given by Proposition  \ref{secondorder}
become
\[
a_{k,n} \le \dfrac{s^{n-k}}{s^n(s-1)} , \qquad a_k \leq \dfrac{1}{s^k(s-1)}
\]

$$
b_{k,n} \le \dfrac{1}{s-1}\lp s^{-(3k+2)} + 2s^{-\frac{n+k}{2}+1}\rp , \qquad b_k \le \dfrac{s^{-(3k+2)}}{s-1}
$$

By Proposition \ref{leadterm}$a)$, we have that in the uniform case, $m_2$ is always the leading term.
\vskip 0.3cm

The proportion of words of lenght $n$ with no overlap is
\beqn \label{nonov}
\frac{s-1}{s} - 
\P\lp \bigcup_{j=n/2}^{n-1} R_{n}(j)  \backslash \bigcup_{j=1}^{n/2-1} R_{n}(j)\rp
-
\sum_{i=2}^{n/2-1}  \P\lp  R_{2i}(i) \backslash \bigcup_{j=1}^{i-1} R_{2i}(j) \rp  
\ .
\eeqn
Further
\[
\bigcup_{j=n/2}^{n-1} R_{n}(j)  \backslash \bigcup_{j=1}^{n/2-1} R_{n}(j)
=
\bigcup_{j=1}^{n-1} R_{n}(j)  \backslash \bigcup_{j=1}^{n/2-1} R_{n}(j) \ .
\]
By Lemma \ref{small}
\[
\bigcup_{j=1}^{n-1} R_{n}(j) = \bigcup_{j=1}^{n/2} R_{n}(j) \ . 
\]
Thus the leftmost probability in (\ref{nonov}) is
\[
\P\lp  R_{n}(n/2)  \backslash \bigcup_{j=1}^{n/2-1} R_{n}(j)\rp \  ,
\]
that can be added to the rightmost term in (\ref{nonov}). Thus
\[
\P(S_n=0) = 
\frac{s-1}{s} - 
-
\sum_{i=2}^{n/2}  \P\lp  R_{2i}(i) \backslash \bigcup_{j=1}^{i-1} R_{2i}(j) \rp  
\ .
\]

Similarly, the limiting proportion of words  with no overlap is exactly
\beqn \label{duplas}
\frac{s-1}{s} - 
\sum_{i=2}^{\infty}   \frac{\#\{S_{i}=0\} }{s^{2i}} 
=
\frac{s-1}{s} - 
\sum_{i=2}^{\infty}   \frac{1}{s^{i}} \P(S_{i}=0) 
 \ .
\eeqn
Since $\P(S_{2i+1}=0)=\P(S_{2i}=0)$, the last expression becomes
\[
\frac{s-1}{s} - 
2 \frac{s+1}{s} \sum_{i=1}^{\infty}   \frac{1}{s^{i}} \P(S_{2i}=0) \ . 
\]

Moreover
\[
\P(S_{2n}=0) = \P(S_{2n-2}=0) - \frac{1}{s^n}\P(S_{n}=0) \ .
\]

\end{ex}

\section{Tools for the proofs}

Before proving our main theorem, we prove a number of preparatory lemmas.
Firstly, we recall the following  classical notation. For a positive integer $x$ we write
 $\lf x \rf$ for the largest integer smaller or equal than $x$. 
Similarly, we write $\lr x \rr$ for the smallest integer larger or equal than $x$.

\begin{lem} \label{lqineq}
Let $p \ge 1$, $q \ge 1$. Then 
\[
m_{qp} \le m_q^p \ .
\]
\end{lem}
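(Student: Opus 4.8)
The plan is to prove $m_{qp}\le m_q^p$ for real exponents $p\ge 1$, $q\ge 1$, where $m_q=\sum_{\alpha\in A}p_\alpha^q$ and $\sum_\alpha p_\alpha=1$. The key observation is that because the $p_\alpha$ are probabilities, each term satisfies $0<p_\alpha\le 1$, and the quantity $m_q$ is a sum of powers of numbers in the unit interval. The inequality is essentially a statement that the $\ell_q$-``mass'' contracts super-multiplicatively as the exponent grows, which is a reflection of the monotonicity of $\ell_r$-norms of a summable sequence in $r$.

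First I would set $x_\alpha=p_\alpha^q\in(0,1]$, so that $m_{qp}=\sum_\alpha x_\alpha^{\,p}$ and $m_q=\sum_\alpha x_\alpha$. The claim then reduces to showing
\[
\sum_{\alpha\in A} x_\alpha^{\,p}\ \le\ \Bigl(\sum_{\alpha\in A} x_\alpha\Bigr)^{p}
\qquad\text{for all } p\ge 1,\ x_\alpha\ge 0 .
\]
This is the standard super-additivity of the map $t\mapsto t^{p}$ for $p\ge 1$ on nonnegative summands, and it holds for \emph{any} nonnegative sequence, not only probabilities; the hypothesis $q\ge 1$ is used only to guarantee that the inner sum $m_q=\sum_\alpha x_\alpha=\sum_\alpha p_\alpha^{q}$ is finite (indeed $\le 1$) when $A$ is countable, so that all expressions are well defined.

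The cleanest route is to factor out the total mass. Writing $M=\sum_\alpha x_\alpha$ (with $0<M\le 1$ since $q\ge 1$ and the $p_\alpha$ sum to $1$), set $y_\alpha=x_\alpha/M\in[0,1]$, so $\sum_\alpha y_\alpha=1$. Then
\[
\frac{\sum_\alpha x_\alpha^{\,p}}{M^{\,p}}=\sum_\alpha y_\alpha^{\,p}\le \sum_\alpha y_\alpha=1,
\]
where the single inequality uses $y_\alpha^{\,p}\le y_\alpha$, valid because $0\le y_\alpha\le 1$ and $p\ge 1$. Multiplying through by $M^{p}$ gives $\sum_\alpha x_\alpha^{\,p}\le M^{p}=\bigl(\sum_\alpha x_\alpha\bigr)^{p}$, which is exactly the desired bound. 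Unwinding the substitution $x_\alpha=p_\alpha^q$ yields $m_{qp}\le m_q^{\,p}$.

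I do not anticipate a genuine obstacle here; the statement is elementary and the only points requiring a word of care are the convergence bookkeeping for a countable alphabet (ensuring $M=m_q\le 1$ so the normalization is legitimate and no series diverges) and stating clearly that the termwise inequality $y_\alpha^{p}\le y_\alpha$ is what drives everything. An alternative one-line argument is to invoke the monotonicity of $\ell_r$-norms, $\|\cdot\|_{r'}\le\|\cdot\|_r$ for $r'\ge r$, applied with the sequence $(p_\alpha)_\alpha$ and exponents $r=q$, $r'=qp$; but the self-contained normalization argument above is preferable since it keeps the proof within the paper's elementary toolkit and makes the role of the hypotheses transparent.
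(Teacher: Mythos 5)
Your proof is correct. The inequality you need, $\sum_\alpha x_\alpha^{\,p}\le\bigl(\sum_\alpha x_\alpha\bigr)^{p}$ for $p\ge 1$ and nonnegative $x_\alpha$ with finite sum, is established cleanly by your normalization argument ($y_\alpha=x_\alpha/M\in[0,1]$, hence $y_\alpha^{\,p}\le y_\alpha$), and the substitution $x_\alpha=p_\alpha^{q}$ converts it exactly into $m_{qp}\le m_q^{\,p}$; the convergence bookkeeping ($m_q\le 1$ for $q\ge 1$) is also handled properly. The paper's own proof is precisely the ``alternative one-line argument'' you mention at the end: it observes that $m_q^{1/q}$ is the $\ell_q$-norm of $\theta$ and invokes the classical monotonicity of sequence-space norms, $\|\theta\|_{qp}\le\|\theta\|_{q}$, writing $m_{qp}=(m_{qp}^{1/qp})^{qp}\le(m_q^{1/q})^{qp}=m_q^{p}$. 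So the two proofs rest on the same underlying fact; the difference is that the paper cites it as known, while you re-derive it from scratch. What the paper's route buys is brevity. What yours buys is self-containedness and transparency: your argument shows the inequality holds for an arbitrary nonnegative summable sequence (probabilities are not needed for the superadditivity step itself, only to guarantee finiteness), and it isolates exactly where each hypothesis enters, which the black-box citation hides.
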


\proof 
Since $m_q^{1/q}$ is the $\L_q$ norm of the vector $\theta$, a classical 
$\L_q$ inequality gives
\[
m_{qp} = (m_{qp}^{1/qp})^{qp}\le (m_{q}^{1/q})^{qp} = m_q^p \ .
\]
\qed

The following lemma is a tool to present explicit computations for the
probability $\P(B_n(j))$.

\begin{lem} \label{newton}
The following equality holds for every positive integers $j$ and $\ell$
\[
\sum_{w \in \C^{j} } \P(w)^{\ell} = m_\ell^j \ .
\]
\end{lem}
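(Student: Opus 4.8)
The plan is to prove the identity $\sum_{w \in \C^j} \P(w)^\ell = m_\ell^j$ by directly exploiting the product structure of the measure $\P$. Since $\P$ is a product measure with i.i.d.\ marginals, for any word $w = (w_1,\dots,w_j) \in \C^j$ we have $\P(w) = \prod_{i=1}^j p_{w_i}$, where $p_{w_i}$ denotes the marginal probability of the letter $w_i$. Raising this to the power $\ell$ gives $\P(w)^\ell = \prod_{i=1}^j p_{w_i}^\ell$, so that the sum over all words factors coordinate-by-coordinate.

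First I would write out the sum explicitly as a sum over the $j$ coordinates, each ranging over the alphabet $\C$ (indexed by $A$):
\[
\sum_{w \in \C^j} \P(w)^\ell
= \sum_{w_1 \in \C} \cdots \sum_{w_j \in \C} \prod_{i=1}^j p_{w_i}^\ell \ .
\]
The key algebraic step is the standard factorization of a sum of products into a product of sums, valid precisely because the index sets for the different coordinates are independent and the summand splits as a product over coordinates. This yields
\[
\sum_{w_1 \in \C} \cdots \sum_{w_j \in \C} \prod_{i=1}^j p_{w_i}^\ell
= \prod_{i=1}^j \lp \sum_{\alpha \in A} p_\alpha^\ell \rp
= \lp \sum_{\alpha \in A} p_\alpha^\ell \rp^j = m_\ell^j \ ,
\]
where the last equality is simply the definition $m_\ell = \sum_{\alpha \in A} p_\alpha^\ell$ given in the notation section.

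There is essentially no substantial obstacle here; the statement is a clean consequence of the product structure, and the whole argument reduces to the interchange of summation and the factorization of the multi-index sum. The one point that deserves a word of care is the case of a countable alphabet, where the sums are infinite series: I would note that all terms $p_\alpha^\ell$ are nonnegative, so by Tonelli's theorem (or the standard rearrangement theorem for nonnegative series) the interchange of summation order and the factorization are justified without any convergence worries, and each factor $\sum_{\alpha \in A} p_\alpha^\ell = m_\ell$ is finite since $m_\ell \le m_1 = 1$ for $\ell \ge 1$. This handles both the finite and countable cases uniformly and completes the proof.
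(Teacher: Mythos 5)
Your proof is correct, but it follows a genuinely different route from the paper's. The paper proves this lemma (labelled ``newton'' for a reason) via the multinomial theorem: it groups the words $w \in \C^j$ according to their letter-count vector $(j_\alpha)_{\alpha\in A}$ with $\sum_\alpha j_\alpha = j$, writes $\P(w)^\ell = \prod_\alpha (p_\alpha^\ell)^{j_\alpha}$, and then sums over all count vectors weighted by the multinomial coefficient $\binom{j}{\prod j_\alpha}$, recognizing the result as the multinomial expansion of $\bigl(\sum_\alpha p_\alpha^\ell\bigr)^j$ (the paper's final display actually omits the exponent $j$ on the middle expression --- a typo). Your argument instead factorizes the multi-index sum coordinate by coordinate,
\[
\sum_{w_1 \in \C} \cdots \sum_{w_j \in \C} \prod_{i=1}^j p_{w_i}^\ell
= \prod_{i=1}^j \Bigl( \sum_{\alpha \in A} p_\alpha^\ell \Bigr) = m_\ell^j \ ,
\]
with no combinatorial bookkeeping at all. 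What your approach buys: it avoids multinomial coefficients entirely, and it handles the countable-alphabet case more transparently --- applying the multinomial theorem to infinitely many variables requires a small justification that the paper glosses over, whereas your appeal to nonnegativity and Tonelli makes the rearrangement unimpeachable in both the finite and countable settings. What the paper's approach buys is mainly the explicit connection to the multinomial (Newton) expansion, which motivates the lemma's name but adds nothing essential to the proof.
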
 

\proof
For each $w \in \C^{j}$ one has
\[
\P(w)= \prod_{\alpha \in A} p_\alpha^{j_\alpha} \ , 
\qquad {\rm \ where \ } \quad \sum_{ \alpha\in A} j_\alpha=j \ .
\]
Thus 
$$
\P(w)^{\ell} = \prod_{\alpha \in A} (p_\alpha^{j_\alpha})^{\ell}
             = \prod_{\alpha \in A} (p_\alpha^{\ell})^{j_\alpha} \ .
$$
Thus
\[
\sum_{w \in \C^{j} } \P(w)^{\ell}
=\sum_{ \sum_{\alpha\in A}{j_\alpha}=j} 
{j \choose \prod j_\alpha}   \prod_{\alpha \in A} (p_\alpha^{\ell})^{j_\alpha}
=\sum_{\alpha \in A} p_\alpha^{\ell} = m^j_\ell \ .
\]
\qed

The next lemma says that, the total measure of the $n$-strings 
that have small overlap remainds the same if we  "cut" the central
letters of the strings.

\begin{lem} \label{shrink}
Let $k \le \lf n/2 \rf -1$. Then 
\[
\P\lp \bigcup_{j=k}^{\lf n/2 \rf -1} R_n(j) \rp = \P\lp \bigcup_{j=k}^{\lf n/2 \rf -1} R_{2 (\lf n/2 \rf -1) }(j) \rp  .
\]
\end{lem}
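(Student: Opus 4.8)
The plan is to exploit the product structure of $\P$ together with the elementary observation that, once $j \le M := \lf n/2 \rf - 1$, the overlap event $R_n(j)$ constrains only the first $M$ and the last $M$ letters of the string, leaving the $n-2M$ central letters completely free. Indeed $R_n(j)=\{x_1^n : x_i = x_{n-j+i}, \ i=1,\dots,j\}$, and because $j\le M$ the indices $1,\dots,j$ all lie among the first $M$ positions while $n-j+1,\dots,n$ all lie among the last $M$ positions; the central positions $M+1,\dots,n-M$ (there are either two or three of them, according to the parity of $n$) never enter the definition. The same is therefore true of the entire union $E_n := \bigcup_{j=k}^{M} R_n(j)$, which is the key point.

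First I would introduce the projection $\pi : \C^n \go \C^{2M}$ that deletes the central block, i.e.\ keeps the first $M$ and the last $M$ coordinates and concatenates them into a word of length $2M$. Since $\P$ is a product measure with identically distributed marginals, the marginal of $\P$ on any subset of coordinates is again the corresponding i.i.d.\ product measure; hence $\pi$ pushes the measure on $\C^n$ forward onto the measure on $\C^{2M}$, the central letters integrating to $1$. It then suffices to establish the set identity $E_n = \pi^{-1}\lp \bigcup_{j=k}^{M} R_{2M}(j) \rp$, after which $\P(E_n)=\P\lp\pi^{-1}(\bigcup_{j=k}^{M} R_{2M}(j))\rp = \P\lp\bigcup_{j=k}^{M} R_{2M}(j)\rp$ is immediate.

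The heart of the matter, and the step that requires care, is to check that for each fixed $j\in\{k,\dots,M\}$ the events $R_n(j)$ and $R_{2M}(j)$ impose literally the same constraint on the retained coordinates, so that $R_n(j)=\pi^{-1}(R_{2M}(j))$ and the preimage commutes with the union. Writing the kept letters as $u=x_1^M$ and $v=x_{n-M+1}^n$ and relabelling the concatenation $y=uv\in\C^{2M}$, one rewrites $R_n(j)$ as the system $u_i=v_{M-j+i}$ for $i=1,\dots,j$ (the first $j$ letters of $u$ matching the last $j$ letters of $v$); the identical index computation applied to $R_{2M}(j)=\{y_1^j=y_{2M-j+1}^{2M}\}$ produces exactly the same system $u_i=v_{M-j+i}$. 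This bookkeeping is the only real obstacle: one must verify that $j\le M$ guarantees both the prefix block and the suffix block stay clear of the central letters, and that the relabelling carries one overlap condition precisely onto the other. Once this is in place the lemma follows at once, and I would remark that the argument is uniform in the parity of $n$, the sole difference being whether two or three central letters are discarded.
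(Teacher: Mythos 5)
Your proposal is correct and follows essentially the same route as the paper: the paper's proof also deletes the central block $x_{\lf n/2 \rf}^{\lr n/2 \rr+1}$ (of length $2$ or $3$ depending on parity), notes that for $j \le \lf n/2 \rf - 1$ the overlap constraint never touches those positions, and integrates the deleted letters out using the product structure. Your version merely packages this as the set identity $R_n(j)=\pi^{-1}\lp R_{2(\lf n/2\rf-1)}(j)\rp$ followed by a pushforward argument, which is the same computation in cleaner notation.
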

\proof
$w=\wn \in \bigcup_{j=k}^{\lf n/2\rf-1} R_n(j)$ if and only if there exists a $j$ such that 
$k \le j \le  \lf n/2\rf-1$ and $\wn \in R_n(j)$.
Thus
\[
w= w_1w_2w_1  \ ,
\]
where $w_1$ is a $j$-string and $w_2$ is an $n-2j$-string and they are independent.
Now we write $w_2= w_{2,1}w_{2,2}w_{2,3}$
where $w_{2,2}$ is the central word of $w_2$, of length 2 in the case that $n$ is even or of length 3 in
the case that $w_2$ it is odd. Namely 
\[ w_{2,2} = x_{\lf n/2 \rf}^{\lr n/2 \rr+1} \ ,
\]
and $w_{2,1}$ and $w_{2,3}$ are words of length $\lf (n-2j)/2 \rf -1$.
Now, define $\tilde{w}=w_1w_{2,1}w_{2,3}w_1 \in R_{2\lf n/2\rf-1}(j)$, which is independent of $w_{2,2}$.
Thus
\beqa
\P\lp \bigcup_{j=k}^{\lf n/2 \rf-1 } R_n(j) \rp 
&=&
\sum_{w \in \cup_{j=k}^{\lf n/2 \rf-1 } R_n(j)} \P(w) \\
&=&
\sum_{w_{1}w_{2,1} \in \C^{n-2} } 
\sum_{w_{2,2}      \in \C^{i} }  \P(w_1w_{2,1}w_{2,1}w_1)\P(w_{2,2}) \ .
\eeqa
Summing independently each term, the first term sums up to $R_{2\lf n/2\rf-1}(j)$
and the second one sum up to one.
\qed

The next lemma says that the total measure of the set of $n$-strings with large overlap, goes to zero exponentially fast.

\begin{lem} \label{small}
The following holds 
\[
\P\lp \bigcup_{j=1}^{\lr n/2 \rr} B_n(j) \rp =
\P\lp \bigcup_{j= \lf n/2 \rf}^{n-1} R_n(j) \rp \le
\frac{n}{2} m_2^{\lf n/2\rf}\ .
\]
\end{lem}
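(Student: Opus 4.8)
The statement splits into an identity and an estimate, and I would handle the two separately.

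For the identity I would simply re-index using the duality (\ref{dual}), which asserts $B_n(n-k)=R_n(k)$ for $1\le k\le n-1$. Substituting $k=n-j$, as $j$ runs over $1,\dots,\lr n/2\rr$ the index $n-j$ runs over $\lf n/2\rf,\dots,n-1$, so the two unions coincide set-theoretically and hence have equal measure. No measure-theoretic work is needed here.

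For the estimate I would bound the right-hand union by the sum of the individual measures, $\P\lp\bigcup_{j=\lf n/2\rf}^{n-1}R_n(j)\rp\le\sum_j\P(R_n(j))=\sum_{j=1}^{\lr n/2\rr}\P(B_n(j))$, so everything reduces to controlling $\P(B_n(j))$ for small block length $j$. Fix $j$ and write $n=qj+r$ with $q=\lf n/j\rf$ and $0\le r<j$. A string of $B_n(j)$ is determined by its base block, which I would split as $uv$ with $u$ of length $r$ and $v$ of length $j-r$; in the periodic extension the prefix $u$ is repeated $q+1$ times and $v$ is repeated $q$ times (the multiplicities sum to $r(q+1)+(j-r)q=n$), so $\P(w)=\P(u)^{q+1}\P(v)^q$ by independence. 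Summing over all base blocks and applying Lemma \ref{newton} twice gives the exact formula
\[
\P(B_n(j))=m_{q+1}^{\,r}\,m_q^{\,j-r}\ .
\]
By Lemma \ref{lqineq} one has $m_q\le m_2^{q/2}$ and $m_{q+1}\le m_2^{(q+1)/2}$, valid as soon as $q\ge 2$; plugging these in yields
\[
\P(B_n(j))\le m_2^{\,[r(q+1)+(j-r)q]/2}=m_2^{\,(qj+r)/2}=m_2^{\,n/2}\ ,
\]
since $qj+r=n$. The union runs over $\lr n/2\rr$ indices $j$, each contributing at most $m_2^{\lf n/2\rf}$, and the union bound then delivers the asserted $\frac{n}{2}\,m_2^{\lf n/2\rf}$ (up to the elementary count of terms).

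The one point requiring care — and the only real obstacle — is the central block, which occurs when $n$ is odd and $j=\lr n/2\rr$: there $q=1$, so Lemma \ref{lqineq} gives nothing beyond $m_1=1$ and the clean bound $\P(B_n(j))\le m_2^{n/2}$ fails. In that single case, however, $r=\lf n/2\rf$ and $j-r=1$, so the exact formula reads $\P(B_n(j))=m_2^{\lf n/2\rf}m_1=m_2^{\lf n/2\rf}$. This is precisely why the exponent in the final bound is $\lf n/2\rf$ rather than $n/2$: every other term is at most $m_2^{n/2}\le m_2^{\lf n/2\rf}$, and this exceptional term matches $m_2^{\lf n/2\rf}$ exactly, so replacing $n/2$ by $\lf n/2\rf$ absorbs it uniformly in $n$.
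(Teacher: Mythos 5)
Your proof is correct, and its skeleton is the same as the paper's: duality (\ref{dual}) for the equality, a union bound, factorization of the measure of a periodic word via Lemma \ref{newton}, and Lemma \ref{lqineq} to turn everything into powers of $m_2$. Where you genuinely differ is in the per-block estimate, and your version is sharper. The paper writes $w=w_j\cdots w_jw_r$ and bounds the leftover prefix crudely by $\P(w_r)\le\rho^r$, getting $\P(B_n(j))\le m_{\lf n/j\rf}^{j}\rho^{r}$; you instead use that $w_r$ repeats the first $r$ letters of the base block, split the base block as $uv$ with $u$ of length $r$, and obtain the exact identity $\P(B_n(j))=m_{q+1}^{r}m_{q}^{j-r}$ with $q=\lf n/j\rf$. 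This is not a cosmetic improvement: at the central block ($n$ odd, $j=\lr n/2\rr$, hence $q=1$) the paper's chain of inequalities actually breaks down, since Lemma \ref{lqineq} is invoked there to assert $m_{1}^{j}=1\le m_2^{(n-r)/2}$, which is false; your exact formula gives precisely $m_2^{\lf n/2\rf}$ for that term and explains why the exponent in the statement is $\lf n/2\rf$ rather than $n/2$. So your argument in fact repairs the one weak point of the paper's own write-up. The only loose end --- which you flag, and which the paper shares --- is the count of summands: the union has $\lr n/2\rr$ terms, so for odd $n$ the union bound literally yields $\frac{n+1}{2}m_2^{\lf n/2\rf}$ rather than the asserted $\frac{n}{2}m_2^{\lf n/2\rf}$; this is cured, for instance, by discarding the $j=1$ term via the inclusion $B_n(1)\subseteq B_n(2)$, after which at most $\lf n/2\rf$ summands remain, each bounded by $m_2^{\lf n/2\rf}$ as you showed.
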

\proof
The equality follows by duality.
To prove the inequality, 
firstly we have
\beqn \label{cortos}
\P\lp \bigcup_{j=1}^{\lr n/2 \rr} B_n(j) \rp 
\le
\sum_{j= 1}^{\lr n/2 \rr} \P(B_n(j)) \ .
\eeqn
Still, if $w\in B_n(j)$, then we can write $n=j\lf n/j \rf+r$ where $0 \le r < j$.
Thus 
\[
w=\underbrace{w_jw_j...w_j}_{\lf n/j \rf \ \rm{times}}w_r  \ ; \qquad w_j \in \C^j, \ w_r \in \C^r .
\]
Therefore, by Lemma \ref{newton}
\[
\P(B_n(j)) \le \sum_{w_j \in \C^{j}} \P(w_j)^{\lf n/j \rf} \rho^r
= m_{\lf n/j \rf}^{j}  \rho^r
\ .
\]
By Lemma \ref{lqineq}
$$
m_{\lf n/j \rf}^{j} 
\le
m_{2}^{(n-r)/2} \ .
$$
Observe that $\rho \le m_2^{1/2}$.
Thus, the sum in (\ref{cortos}) is bounded from above by
\[
\sum_{j= 1}^{\lr n/2 \rr} (m_2^{1/2})^{n} = 
\lf\frac{n}{2}\rf m_2^{n/2} \ .
\]
\qed



\begin{lem} \label{blocks2}
The following holds 
\[
 \bigcup_{k=1}^{n-1} R_n(k) = 
 \bigcup_{k=1}^{k=\lr n/2 \rr} R_n(k) .
\]
\end{lem}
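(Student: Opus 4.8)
The plan is to prove the nontrivial inclusion $\bigcup_{k=1}^{n-1} R_n(k) \subseteq \bigcup_{k=1}^{\lr n/2 \rr} R_n(k)$; the reverse inclusion is immediate, since for $n \ge 2$ the right-hand union is indexed by a subset of $\{1,\dots,n-1\}$. The first step is to translate membership in $R_n(k)$ into a statement about \emph{periods}. By Definition \ref{overlap}, $\wn \in R_n(k)$ means $x_1^k = x_{n-(k-1)}^n$; writing $p=n-k$, this is equivalent to $x_i = x_{i+p}$ for every $1 \le i \le n-p$, i.e.\ to $\wn$ admitting $p$ as a period. (Equivalently, by the duality (\ref{dual}) one has $R_n(k)=B_n(n-k)$, and $B_n(j)$ is exactly the set of strings of period $j$.) Under this dictionary, $\bigcup_{k=1}^{n-1} R_n(k)$ is the set of strings possessing some period $p\in\{1,\dots,n-1\}$, while $\bigcup_{k=1}^{\lr n/2 \rr} R_n(k)$ is the set of strings possessing some period $p\in\{\lf n/2 \rf,\dots,n-1\}$. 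Thus the lemma asserts exactly that a string with some period smaller than $n$ must also have a period in the range $[\lf n/2 \rf,\,n-1]$.

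The key step is the elementary fact that periods are closed under multiples: if $\wn$ has period $p$, then it has period $\ell p$ for every positive integer $\ell$ with $\ell p \le n-1$. I would prove this by induction on $\ell$, chaining the equalities $x_i = x_{i+p}$ and $x_{i+p}=x_{i+2p}$ (both valid in the relevant index ranges) to get $x_i=x_{i+2p}$, and iterating. Granting this, take any $\wn$ in the left-hand union, fix a period $p\le n-1$, and let $P=\ell p$ be the largest multiple of $p$ not exceeding $n-1$. Then $P$ is again a period with $P\le n-1$, and it suffices to show $P\ge \lf n/2 \rf$, for then $\wn\in R_n(n-P)$ with $n-P\le \lr n/2 \rr$, which is what we want.

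To check $P\ge \lf n/2 \rf$ I would split into two cases. If $p\ge \lf n/2 \rf$ already, then $P=p$ (take $\ell=1$) suffices. If $p<\lf n/2 \rf$, then maximality of $\ell$ gives $(\ell+1)p>n-1$, hence $P>n-1-p$; combined with $p\le \lf n/2 \rf-1<n/2$ this yields $P>n/2-1$, and since $P$ is an integer we conclude $P\ge \lf n/2 \rf$. The only slightly delicate point is this last piece of arithmetic, where one has to track the floor and ceiling for both parities of $n$; the rest is a routine induction. This establishes the inclusion and hence the claimed set equality.
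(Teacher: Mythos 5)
Your proposal is correct and takes essentially the same route as the paper: the paper also converts a large overlap $k \ge n/2$ into a small period $j = n-k$ via the duality $R_n(k)=B_n(j)$, and then reads the complementary overlap off the block decomposition of $B_n(j)$ — of size $n \bmod j$ when $j \nmid n$, and of size $j$ otherwise — which is exactly the overlap $n-P$ given by your largest multiple $P$ of the period. Your version, with the closure of periods under multiples proved by induction and the explicit floor/ceiling arithmetic, is simply a more careful formalization of the paper's (rather terse) sketch of the same combinatorial fact.
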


\proof Let $k \geq \frac{n}{2}$.
If $\omega \in R_n(k) = B_n(n-k)$, so $\omega = \omega_1 \omega_1 \cdots \omega_r$, with $n =\left\lf \dfrac{n}{n-k}j \right\rf + r$, with $r$ being the size of $\omega_r$(wich could be $0$), for some integer $j$. If $r=0$, we have that $\omega$ overlaps in(at least) a $\omega_1$ string. If $r>0$, we have that $\omega$ overlaps in(at least) a $\omega_r$ string.
In both cases, we have a smaller overlap than $n/2$, and it proves that $\bigcup_{k=1}^{n-1} R_n(k) \in \bigcup_{k=1}^{k=\lr n/2 \rr} R_n(k)$, and this concludes the proof.    
\qed

\section{Proofs}

\subsection{Proof of Theorem \ref{distrib}}

\noindent\emph{ Proof of Theorem \ref{distrib}.}
For short hand notation put
\beqa
G_{n}(k)= \P(S_n \ge k) =
\P\lp \bigcup_{j=k}^{n-1} R_{n}(j) \rp \ .
\eeqa

We first consider the case when $n$ is even. 
By a simple decomposition
\beqa
G_{n}(k)&=&
\P\lp \bigcup_{j=k}^{n-1} R_{n}(j) \rp \\
&=& \P\lp \bigcup_{j=k}^{n/2-1} R_{n}(j) \rp+ \P\lp \bigcup_{j=n/2}^{n-1} R_{n}(j) \backslash
   \bigcup_{j=k}^{n/2-1} R_{n}(j)  \rp
\eeqa
By Lemma \ref{shrink}
the left most term in the last expression is equal to
\[
\P\lp \bigcup_{j=k}^{n/2-1} R_{n-2}(j) \rp \ .
\]
We can rewrite the last probability as
\[
  \P\lp \bigcup_{j=k}^{(n-2)-1} R_{n-2}(j) \rp 
 -\P\lp \bigcup_{j=n/2}^{(n-2)-1} R_{n-2}(j) \backslash
  \bigcup_{j=k}^{n/2-1} R_{n-2}(j)  \rp .
\]
The  left most term, by defintion, is equal to  
$
G_{n-2}(k) \ .
$
Thus, we conclude that
\beqa
G_{n}(k) &=&G_{n-2}(k) \\
&&+ \P(\cup_{j=n/2}^{n-1} R_{n}(j)  \backslash \cup_{j=k}^{n/2-1} R_{n}(j) ) \\
&&- \P(\cup_{j=n/2}^{(n-2)-1} R_{n-2}(j) \backslash \cup_{j=k}^{n/2-1} R_{n-2}(j) ) .
\eeqa
A similar argument shows that
\beqa
G_{n-2}(k) &=& G_{n-4}(k) \\
&&+ \P(\cup_{j=n/2-1}^{(n-2)-1} R_{n-2}(j)  \backslash \cup_{j=k}^{n/2-2} R_{n-2}(j)  ) \\
&&- \P(\cup_{j=n/2-1}^{(n-4)-1} R_{n-4}(j)  \backslash \cup_{j=k}^{n/2-2} R_{n-4}(j) ) .
\eeqa

Thus
\beqa
G_{n}(k) &=& G_{n-4}(k) \\
&&+ \P(\cup_{j=n/2}^{n-1} R_{n}(j) \backslash \cup_{j=k}^{n/2-1} R_{n}(j)  ) \\
&&- \P(\cup_{j=n/2}^{(n-2)-1} R_{n-2}(j)  \backslash \cup_{j=k}^{n/2-1} R_{n-2}(j) ) \\
&&+ \P(\cup_{j=n/2-1}^{(n-2)-1} R_{n-2}(j) \backslash \cup_{j=k}^{n/2-2} R_{n-2}(j) ) \\
&&- \P(\cup_{j=n/2-1}^{(n-4)-1} R_{n-4}(j) \backslash \cup_{j=k}^{n/2-2} R_{n-4}(j) ) .
\eeqa
Solving the two lines in between we get that they are equal to
\[
 \P( R_{n-2}(n/2-1) \backslash \cup_{j=k}^{n/2-2} R_{n-2}(j) ) \ .
\] 
A recursive
argument up to $k$ gives
\beqa
G_{n}(k) &=& G_{2k}(k) \\
&&+ \P(\cup_{j=n/2}^{n-1} R_{n}(j))  \backslash   \cup_{j=k}^{n/2-1} R_{n}(j)  ) \\
&&+ \sum_{i=k+1}^{n/2-1}  \P( R_{2i}(i) \backslash \cup_{j=k}^{i-1} R_{2i}(j) )      \\
&&- \P(\cup_{j=k+1}^{2k-1} R_{2k}(j) \backslash  \cup_{j=k}^{k}  R_{2k}(j)  ) .
\eeqa
Computing the first and last term on the right hand side of the above equality, it gives
\beqn \label{prob}
 G_{n}(k)= 
 R_{2k}(k)
+\P\lp \bigcup_{j=n/2}^{n-1} R_{n}(j)   \backslash   \bigcup_{j=k}^{n/2-1} R_{n}(j)  \rp
+\sum_{i=k+1}^{n/2-1}  \P\lp  R_{2i}(i) \backslash \bigcup_{j=k}^{i-1} R_{2i}(j) \rp    
\ .
\eeqn
This proves $a)$ since $ \P(R_{2k}(k))=m_2^k$.

Further, since the second term on the rigth hand side goes to zero as $n$ diverges, by Lemma \ref{small},
we coclude that
\beqn \label{cumulate}
\lim_{n\go\infty} G_{n}(k)= 
\P( R_{2k}(k)  )  +
\sum_{i=k+1}^{\infty}  \P( R_{2i}(i) \backslash \cup_{j=k}^{i-1} R_{2i}(j) )     \ .
\eeqn
This proves $c)$.

For the probability mass function we have
\[
\P(S_n=k)= G_n(k)-G_n(k+1) \ .
\]
And solving this equation using (\ref{prob}) we get that $\P(S_n=k)$ is equal to
\beqa
 &&\P( R_{2k}(k) ) - \P( R_{2(k+1)}(k+1)  ) \nn\\
 &&- \P\lp \bigcup_{j=n/2}^{n-1} R_{n}(j)  \cap R_{n}(k) \backslash   \bigcup_{j=k+1}^{n/2-1} R_{n}(j)  \rp\\
 &&- \sum_{i=k+2}^{n/2} \P\lp R_{2i}(i) \cap R_{2i}(k) \backslash \bigcup_{j=k+1}^{i-1} R_{2i}(j)  \rp \nn\\
 &&+ \P(R_{2(k+1)}(k+1)  \backslash   \bigcup_{j=k}^{k} R_{2(k+1)}(j)   ) \nn \ .
\eeqa
Computing the right most term in the first line with the last line in the above display, the result is
$$
-\P(R_{2(k+1)}(k+1) \cap R_{2(k+1)}(k)) \ .
$$
Considering, with some abuse of notation, that the union running over an empty set of indexes
is the empty set, we finally get that 
\beqan
\P(S_n=k) &=& \P( R_{2k}(k) )  \nn\\
 &&- \P\lp \bigcup_{j=n/2}^{n-1} R_{n}(j)  \cap R_{n}(k) \backslash   \bigcup_{j=k+1}^{n/2-1} R_{n}(j)  \rp
    \label{union}\\
 &&- \sum_{i=k+1}^{n/2} \P\lp R_{2i}(i) \cap R_{2i}(k) \backslash \bigcup_{j=k+1}^{i-1} R_{2i}(j)  \rp \ .\nn
 \eeqan
This shows $b)$.

By Lemma \ref{small}, term (\ref{union}) goes to zero as $n$ diverges.
Thus, the limit 
\[  \lim_{n\go\infty} \P(S_n=k) \ ,
\]
exists and is equal to
\beqn \label{fdp}
 \P( R_{2k}(k) )
- \sum_{i=k+1}^{\infty} \P\lp R_{2i}(i) \cap R_{2i}(k) \backslash \bigcup_{j=k+1}^{i-1} R_{2i}(j)  \rp \ .
\eeqn
This shows $d)$.

If $n$ is odd, the above argument changing $n/2$ by $\lf n/2 \rf$ holds.
We conclude that for any positive integer $n$ we have
$G_{2n+1}(k)=G_{2n}(k)$ and $\P(S_{2n+1}=k)=\P(S_{2n}=k)$.

\vskip0.5cm
\qed

\subsection{ Proof of Corollary \ref{nonoverlap}.}

Note that 
$$
\P(S_n=0)= 1- \P(S_n \ge 1) = 1-m_2 - a_{1,n} \ ,
$$ 
and similarly 
$$
\lim_{n\go\infty}\P(S_n=0)=  1-m_2 - a_{1} \ .
$$ 
But
\beq
a_1 
=
\sum_{i=2}^{\infty}  \P( R_{2i}(i) \backslash \cup_{j=1}^{i-1} R_{2i}(j) )  \ .
\eeq
The set in the probability in each term is the set
of words $ww$ where $w$ is an $i$-word without any self-overlap. Namely
\[
R_{2i}(i) \backslash \bigcup_{j=1}^{i-1} R_{2i}(j)
= \ll ww \ | \ w \in \{S_i=0\}\rl \ .
\]
This establishes the equality in  $b)$.

Now we prove the last formula.
In what follows, the first inequality is just by definition, second one  is by Lemma \ref{blocks2}
and the third one is  a simple decomposition.
\beqa
G_{n}(1)
&=&
\P\lp \bigcup_{j=1}^{n-1} R_{n}(j) \rp \\
&=&
\P\lp \bigcup_{j=1}^{n/2} R_{n}(j) \rp \\
&=& 
\P\lp \bigcup_{j=1}^{n/2-1} R_{n}(j) \rp+ \P\lp  R_{n}(n/2) \backslash
   \bigcup_{j=1}^{n/2-1} R_{n}(j)  \rp \ .
\eeqa
By Lemma \ref{shrink}, the leftmost term in the last display is equal to
$\P\lp \cup_{j=1}^{n/2-1} R_{n-2}(j) \rp$. But applying again Lemma \ref{blocks2},
this last probability equals to
$\P\lp \cup_{j=1}^{(n-2)-1} R_{n-2}(j) \rp$, which is $G_{n-2}(1)$. 

It is straightforward to see that
\[
\P\lp  R_{n}(n/2) \backslash \bigcup_{j=1}^{n/2-1} R_{n}(j) \rp
= \sum_{w \in \{S_{n/2}=0\}} \P(w)^2 \ .
\]
Thus we conclude that
\[
\P(S_{n}=0) = \P(S_{n-2}=0) - \sum_{w \in \{S_{n/2}=0\}} \P(w)^2 \ .
\]

It remains to show the strict inequality in $b)$.
By the above argument, the probability of the set of $n$-strings with some overlap
is increasing on $n$. 
Further, the above displays shows that 
\[
\P(S_{n}\ge 1) = \P(S_{n-2} \ge 1) + \sum_{w \in \{S_{n/2}=0\}} \P(w)^2 \ .
\]
Now call $p_1$ and $p_2$ the two largest $p_\alpha$, with $\alpha \in A$
(allowing multiplicities among the $p_\alpha$, tht is  $A$ is considered a multi-set, 
thus it may happen that $p_1=p_2$).
That is
$p_1= \max\{p_\alpha \ | \ \alpha \in A\}$
and $p_2= \max\{p_\alpha \ | \ \alpha \in A\backslash \alpha_0\}$ where $p_{\alpha_0}=p_1$.
It follows that, if $w \in \{S_n=0\}$ then
$\P(w)\le p_1^{n-1}p_2$.
Thus, it follows from the last display that
\[
\P(S_{n}\ge 1) \le \P(S_{n-2} \ge 1) + \P(S_{n/2}=0) p_1^{n/2-1}p_2 \ .
\]
Since $\P(S_{n}=0) $ is decreasing, 
\[
\P(S_{n}\ge 1) \le \P(S_{n-2} \ge 1) + \P(S_{2}=0) \ p_1^{n/2-1}p_2 \ .
\]
An iterative argument shows that
\[
\P(S_{n}\ge 1) \le \P(S_{2}=1) + \P(S_{2}=0) \sum_{j=1}^{n/2-1}  \   p_1^{j}p_2 \ .
\]
And 
\[
\lim_{n \go \infty}\P(S_{n}\ge 1) \le \P(S_{2}=1) + \P(S_{2}=0)  \frac{p_1p_2}{1-p_1}\ .
\]
Since $p_2 \le 1-p_1$ we conclude that
\[
\lim_{n \go \infty}\P(S_{n}\ge 1) \le \P(S_{2}=1) + p_1 \P(S_{2}=0)  \ ,
\]
observing that $\P(S_{2}=1)=m_2$.

\subsection{ Proof of Theorem \ref{bounds}. }
It follows by Theorem \ref{distrib} that 
\[
|\P(S_n = k)-\lim_{n\go\infty}\P(S_n = k)|=
|b_{k,n}-b_k| \ ,
\]
which is bounded from above by
\beqn \label{bk}
\max\ll \sum_{i=n/2+1}^{\infty} \P\lp R_{2i}(i) \cap R_{2i}(k) \rp 
,  \P\lp \bigcup_{j=n/2}^{n-1}   R_{n}(j) \cap R_{n}(k) 
         \backslash \bigcup_{j=k+1}^{n/2-1}   R_{n}(j) \rp   \rl \ .
\eeqn


\[
\]
Consider firstly the first term in (\ref{bk}).
If an $n$-string $w$ belongs to $R_{2i}(i) \cap R_{2i}(k)$ then it has the form
\[ 
w = w_{1}w_{2}w_{1}w_{1}w_{2}w_{1} \ ,
\]
where $w_1$ is a $k$-string and $w_2$ is an $i-2k$-string.
Therefore 
\[
\P(w) = \P(w_{1})^4 \P(w_{2})^2 \ ,
\]
and thus
\[
\P\lp  R_{2i}(i) \cap R_{2i}(k) \rp 
= \sum_{w_1 \in \C^{k}} \P(w_{1})^4 \sum_{w_2 \in \C^{i-2k}} \P(w_{2})^2 
= m_4^k m_2^{i-2k}\ .
\]
Summing over $i$, we get that  the first term in (\ref{bk}) is bounded by
\beqn \label{B1}
\lp\frac{m_4}{m_2^2}\rp^k  \frac{m_2^{n/2 +1}}{1-m_2}  \ .
\eeqn

Consider now the second term in (\ref{bk}).
By duality, the probability  in $b)$ is equivalent to
\beqn \label{trans}
\P\lp \bigcup^{n/2}_{j=1} B_{n}(j)  \cap R_{n}(k) \backslash   \bigcup^{n-k-1}_{j=n/2+1} B_{n}(j)  \rp
\ . 
\eeqn
Since, by definition, $B_n(j) \subset B_n(l)$ for all $l$ multiple of $j$
one has 
\[
\bigcup^{n-k-1}_{j=n/2+1} B_{n}(j) 
=
\bigcup_{j=1}^{n/2 -k} B_{n}(j) 
\ \cup \
\bigcup^{n-k-1}_{j=n/2+1} B_{n}(j) 
\ .
\]
Thus, the set in (\ref{trans}) is equal to  
\[
 \bigcup_{j= n/2-k+1}^{n/2}  B_n(j) \cap R_n(k) 
\backslash \bigcup^{n-k-1}_{j=n/2+1} B_{n}(j) 
\ .
\]


\vskip.3cm
The above expression implies that it is enough to bound 
\[
\lp \sum_{j=n/2-k+1}^{n/2-k/2} 
+
\sum_{j=n/2-k/2+1}^{n/2} \rp
\P( B_n(j) \cap R_n(k) ) 
=I+II\ . 
\]

\vskip.3cm
\vskip.3cm
Consider $I$. 
Since $2j \le n-k$ and $w \in B_n(j)$ then 
there are at least two complete blocks of length $j$ at the beginning of $w$,
and the remaining part of $w$ has length at least  $k$. Thus, we can write
$$
w=w_bw_bw_l \ .
$$ 
Further, since $w\in R_n(k)$, the first and last 
block of length $k$ are equal. Thus
$$
w= w_kw_mw_k \ .
$$
The last two descriptions of $w$ imply that 
$$
w=w_1w_2w_1w_2w_3w_1 \ .
$$
where $w_1$ has length $k$ and $w_2$ has length $j-k$.	
Moreover,  $w_3$ has length $n-2j-k$.
Thus, factorizing the measure of $w$ we have
$$
\P(w)= \P(w_1)^3 \P(w_2)^2 \P(w_3) \ .
$$
Recall that  $\rho= \max\{ p_\alpha \ | \ \alpha\in A\}$. 
Therefore
\[
\P\lp  B_{n}(j) \cap R_{n}(k) \rp 
\le\sum_{w_1\in C^k} \P(w_1)^3 \sum_{w_2\in C^{j-k}} \P(w_2)^2 \rho^{n-k-2j}
= m_3^{k} m_2^{j-k} \rho^{n-k-2j} \ .
\]
Summing over $j$ we have
\beqn \label{B2}
\sum_{j=n/2-k+1}^{(n-k)/2} \P\lp  R_{n}(j) \cap R_{n}(k) \rp   
\le C_\theta m_2^{n/2} \lp \frac{m_3}{m_2^{3/2}} \rp^k \ ,
\eeqn

where $C_{\theta} = m_2/(m_2 - \rho^2)$.
Finally, observe that ${m_3}/{m_2^{3/2}} <1$ is equivalent to 
${m_3}^{1/3} < {m_2^{1/2}}$ which is true by Lemma \ref{lqineq}.

\vskip.3cm
Consider $II$.
Take $w=x_1^n\in B_{n}(j) \cap R_{n}(k)$.
Since $w\in  R_{n}(k) $ one has 
$$
w=w_kw_mw_k \ .
$$
Since blocks can be read forward or backward, every peace of the string is also periodic (that is, the central peace is in $B_{n-2k}(j)$). So, we can recopilate this and write
$$
w=w_1w_2w_1w_2w_3w_1 \ . 
$$
The length of $w_1$ is $k$. The length of $w_2$  is $n-2k-j$
and the length of $w_3$ is  $2j+k-n$.
Factorizing the measure of $w$ we have
\beqan \label{w}
\P( B_n(j) \cap R_n(k)) 
&\le& 
\sum_{w_1 \in \C^k} \P(w_1)^3  \sum_{w_2 \in \C^{n-2k-j}} \P(w_2)^2 \rho^{2j+k-n} \\
&=& m_3^k m_2^{n-2k-j} \rho^{2j+k-n}\ .
\eeqan
Summing over $j$ we have
$$
\sum_{j=(n-k)/2 +1}^{n/2} \P\lp  R_{n}(j) \cap R_{n}(k) \rp   
\le C'_\theta  m_2^{n/2} \lp \frac{m_3}{m_2^{3/2}} \rp^k \ 
$$
where $C'_\theta = (\rho/m_2)^2$.
This ends the proof of $II$.


So, as $C_\theta \geq C'_\theta$, take $C = 2C_\theta$
To end the proof of $b)$ we need to show that the right hand side of
(\ref{B1}) is less or equal than (\ref{B2}).
To this, observe that  this is equivalent to show that
$
m_4 \le m_3 m_2^{1/2} \ .
$
But 
\[
m_4 = \sum_{\alpha\in A} p_\alpha^4 
\le \rho \sum_{\alpha\in A, } p_\alpha^3
= \rho m_3 \ ,
\]
and
\[
\rho =  (\rho^2)^{1/2} 
\le (\rho^{2} + \sum_{\alpha\in A, p_\alpha\not= \rho} p_\alpha^2)^{1/2} 
= m_2^{1/2} \ .
\]
This ends the proof of $a)$.

\vskip.5cm

The proof of $b)$ follows directly from $a)$ summing up the error terms in $a)$.

\qed

\subsection{ Proof of Proposition \ref{secondorder}.}

We first prove $c)$.
We can write
\[
a_k= \lp \sum_{i=k+1}^{2k-1}+\sum_{i=2k}^{\infty} \rp  
\P( R_{2i}(i) \backslash \cup_{j=k}^{i-1} R_{2i}(j) ) = I+II   
\  .
\]
As in the proof of the first term in (\ref{bk}) with $n=4k$ we get 
\beq
II \le \lp\frac{m_4}{m_2^2}\rp^k  \frac{m_2^{2k +1}}{1-m_2} 
= m_4^k  \frac{m_2}{1-m_2} 
 \ .
\eeq

By a direct computation one has
\[
I \le \sum_{i=k+1}^{2k-1}\P( R_{2i}(i))=\sum_{i=k+1}^{2k-1}m_2^j = \frac{m_2^{k+1}-m_2^{2k}}{1-m_2}
\ .
\]
Thus, $c)$ follows since $m_4^k \le m_2^{2k}$.

\vskip.5cm

Proof of $d)$.
\[
b_{k} \le
\lp \sum_{i=k+1}^{2k-1} +\sum_{i=2k}^{\infty} \rp
 \P\lp R_{2i}(i) \cap R_{2i}(k)   \rp 
=I+II\ .
\]
As we computed in the proof of an upper bound for (\ref{bk}) when proving
Theorem \ref{bounds}, $II$ is 
\[
m_4^k \sum_{i=0}^{\infty} m_2^i = m_4^k \frac{1}{1-m_2} \ .
\]

For the leading term in $I$ we note that
\[
 R_{2i}(i) \cap R_{2i}(k)   = \{ ww \ | \ w \in   B_{k+j}(j) \} \ ,
\]
with $j=i-k$.
Thus, for $1 \le j \le k-1$ we compute
\[
\sum_{ w \in   B_{k+j}(j) } \P(w)^2 
\le 
\sum_{ w_j \in   \C^{j} } \P(w_j)^{2 \lf (k+j)/j \rf} \rho^{2r}
= 
m_{2 \lf (k+j)/j \rf}^{j} \rho^{2r} \ .
\]
where $k+j=\lf \frac{k+j}{j} \rf j +r$ and $0 \le r \le j-1$.
We conclude that
\[
m_{2 \lf (k+j)/j \rf}^{j} \rho^{2r} \le m_4^j\rho^{2r} \le m_4^{(k+j-r)/2}\rho^{2r} 
\le m_4^{(k+j)/2} \ .
\]

Therefore
\beqan \label{leading}
I \le m_4^{k} \frac{m_4-m_4^{k}}{1-m_4} \ .
\eeqan


Proof of $a)$ and $b)$. 
Similar computations of those done in the proof of $c)$ and $d)$
can be done to get an upper bound for the second term in $a_{k,n}$ and $b_{k,n}$.

The second term in $a_{k,n}$ is bounded by 
\[
\sum_{i=k+1}^{n/2-1}  \P( R_{2i}(i)  ) 
= \sum_{i=k+1}^{n/2-1} m_2^i = \frac{m_2^{k+1}-m_2^{n/2}}{1-m_2}\ ,
\]
and the first one by
\[
\sum_{i=n/2}^{n-1}  \P( R_{n}(i)  )
=\sum_{i=n/2}^{n-1} m_2^{i} 
=\frac{m_2^{n/2}-m_2^{n}}{1-m_2}\ .
\]
Thus, $a_{k,n} \le (m_2^{k+1}-m_2^{n})/(1-m_2)$.

The first term in $b_{k,n}$ was bounded in the proof of Theorem \ref{bounds}, equation  (\ref{bk}) by 
$ C_\theta m_2^{n/2} \lp \frac{m_3}{m_2^{3/2}} \rp^k $. 
The second one 
is bounded as  was done $b_k$ above.
\qed

\subsection{ Proof of Proposition \ref{leadterm}.}

$a)$ follows directly from Proposition \ref{secondorder} $c)$.

Now we prove $b.1)$.
By Corollary \ref{nonoverlap} $b)$, we have
$a_1 < p_1(1-m_2) < 1-m_2 \le m_2$. Last inequality follows since $m_2 \ge 1/2$. 

Now we prove the first sentence and also $b.2)$.
By definition
\beqa
a_k 
&=& \sum_{i=k+1}^{\infty}  \P \lp R_{2i}(i) \backslash \bigcup_{j=k}^{i-1} R_{2i}(j) \rp \\
&=& \sum_{i=k+1}^{\infty}  \P \lp R_{2i}(i) \rp -  \sum_{i=k+1}^{\infty}(R_{2i}(i) \bigcap \bigcup_{j=k}^{i-1} R_{2i}(j)) \\
&=&
\sum_{i=k+1}^{\infty}m_2^i - \sum_{i=k+1}^{\infty}\P(R_{2i}(i) \bigcap \bigcup_{j=k}^{i-1} R_{2i}(j)) 
\eeqa
Bounding the union by the sum we get
\beqa
a_k
&\ge&\sum_{i=k+1}^{\infty}m_2^i - \sum_{i=k+1}^{\infty} \sum_{j=k}^{i-1}\P(R_{2i}(i)\bigcap R_{2i}(j))\\
&=& \cfrac{m_2^{k+1}}{1-m_2} -  \sum_{j=k}^{\infty} \sum_{i=j+1}^{\infty}\P(R_{2i}(i)\bigcap R_{2i}(j)),
\eeqa
where the equality was obtained by using Fubini's Theorem. 
Now, let's take a look at the last term on the previous equation. it can be written as
$$
\sum_{j=k}^{\infty}\sum_{i=j+1}^{2j -1}\P(R_{2i}(i)\bigcap R_{2i}(j)) + 
\sum_{j=k}^{\infty}\sum_{i=2j}^{\infty}\P(R_{2i}(i)\bigcap R_{2i}(j)) = I+ II
$$
Term $I$ is bounded as in (\ref{leading}):
\[
I \le m_4^k\lp \dfrac{m_4-m_4^k}{1 - m_4} \rp  \leq \dfrac{m_4^{k+1}}{1-m_4}.
\]

For the second one we have
\beqa
II
&=&
\sum_{j=k}^{\infty}\sum_{i=2j}^{\infty} 
\lp \sum_{\omega \in \C^j}\P(\omega)^4 \rp \lp \sum_{\omega \in \C^{i-2j}} \P(\omega)^2\rp \\
&=& \sum_{j=k}^{\infty}\sum_{i=2j}^{\infty} m_4^j m_2^{i-2j} \\
&=& \lp \dfrac{m_4^k}{1-m_4} \rp \lp \dfrac{1}{1-m_2} \rp .
\eeqa

So, we have
$$
a_k \geq m_2^k\lp\dfrac{m_2}{1-m_2} - \dfrac{A(k)}{m_2^k}\rp,
$$

where 
$$
A(k) = \dfrac{m_4^k}{1-m_4} \lp m_4 + \dfrac{1}{1-m_2} \rp \geq I+II.
$$
Clearly, $\lim_{k\to\infty}A(k)/m_2^k = 0$, and also $\lim_{k\to\infty}A(k) = 0$. Putting on the most left side the lower bound given by Theorem \ref{secondorder}, we have that
$$
\dfrac{m_2^{k+1}}{1-m_2} - A(k) \leq a_k \leq \dfrac{m_2^{k+1}}{1-m_2},
$$
and this proves sharpness. To prove $b.2)$, we just have to notice that since, by hypothesis, $m_2/(1-m_2) > 1$, then there is some $k_0$ for which: 
$$\dfrac{m_2}{1-m_2} - \dfrac{A(k)}{m_2^k} \geq 1, \forall k > k_0.$$ 
And this concludes the proof. \qed

\section{Non-convergence in probability}

In this section we show that $S_n$ does not converges in probability when $n$ goes to infinity.
Recall that since we are considering non-trivial cases, we have $\rho < 1$.

%
%

\begin{prop}
Under the conditions of Theorem \ref{distrib},
there is not a random variable $S$ over $\C^{\Nset}$ such that
$S_n$ converge in probability to $S$.
\end{prop}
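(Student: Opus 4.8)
The plan is to argue by contradiction, exploiting the elementary fact that convergence in probability forces the sequence to be Cauchy in probability. Suppose $S_n \go S$ in probability for some random variable $S$ on $\C^{\Nset}$. Since each $S_n$ is integer valued, the triangle inequality through $S$ gives, for every pair $n,m$,
\[
\P(|S_n - S_m| \ge 1) \le \P(|S_n - S| \ge 1/2) + \P(|S_m - S| \ge 1/2) \go 0 \quad (n,m\go\infty),
\]
and because the variables take values in $\Z$ this says exactly that $\P(S_n \ne S_m) \go 0$. In particular $\P(S_{n} \ne S_{n+1}) \go 0$. I will contradict this last statement by producing a lower bound for $\P(S_{n} \ne S_{n+1})$ that stays bounded away from zero for all large $n$.

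The key observation is that one extra letter can create an overlap out of an otherwise non-overlapping word. Recall from (\ref{dyn}) that $S_n$ is a function of $x_1^n$ only, while $x_{n+1}$ is independent of $x_1^n$. If $S_n(x_1^n)=0$ and $x_{n+1}=x_1$, then $x_1^{n+1}$ has an overlap of size one (its first and last letters coincide), so $S_{n+1}\ge 1$ and hence $S_{n+1}\ne S_n$. Conditioning on $x_1^n$ and using the independence of $x_{n+1}$ I then get
\[
\P(S_{n}\ne S_{n+1}) \ge \P\big(S_n=0,\ x_{n+1}=x_1\big) = \E\big[\one\{S_n=0\}\,p_{x_1}\big] =: f(n),
\]
where $p_{x_1}$ denotes the probability of the (random) first letter $x_1$. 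So it remains to show $\liminf_n f(n)>0$.

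To this end I would invoke Corollary \ref{nonoverlap}, by which $q_0:=\lim_n \P(S_n=0)$ exists and satisfies $q_0 > (1-p_1)(1-m_2)>0$. Since $\sum_{\alpha\in A}p_\alpha=1$, the tail $\eta(\varepsilon):=\sum_{\alpha:\,p_\alpha<\varepsilon}p_\alpha$ tends to $0$ as $\varepsilon\go 0$, and each set $A_\varepsilon:=\{\alpha:p_\alpha\ge\varepsilon\}$ is finite. I would fix $\varepsilon$ so small that $\eta(\varepsilon)<q_0/2$; then for all $n$ large enough that $\P(S_n=0)\ge 3q_0/4$ one has $\P(S_n=0,\ x_1\in A_\varepsilon)\ge 3q_0/4-\eta(\varepsilon)\ge q_0/4$, while on $\{x_1\in A_\varepsilon\}$ one has $p_{x_1}\ge\varepsilon$. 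Hence $f(n)\ge \varepsilon\,\P(S_n=0,\ x_1\in A_\varepsilon)\ge \varepsilon q_0/4>0$ for all large $n$, which contradicts $\P(S_n\ne S_{n+1})\go 0$ and proves the proposition.

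I expect the only delicate point to be this last paragraph: one must rule out that, as $n$ grows, the non-overlapping words concentrate their first letter on symbols of vanishing probability, which would let $f(n)$ collapse. This is precisely where the positivity $q_0>0$ from Corollary \ref{nonoverlap}, combined with the summability of $(p_\alpha)_{\alpha\in A}$ that makes $A_\varepsilon$ finite, does the work. In the finite-alphabet case the argument simplifies, since there $p_{x_1}\ge \min_\alpha p_\alpha>0$ deterministically and one may bound $f(n)\ge (\min_\alpha p_\alpha)\,\P(S_n=0)$ directly.
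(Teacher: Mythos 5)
Your proof is correct, and at the decisive step it takes a different --- and in fact more careful --- route than the paper. The paper begins exactly as you do, converting convergence in probability into $\P(S_{n+1}=S_n)\go 1$, and then uses the monotonicity of $n\mapsto T_n$ to note that $\{T_{n+1}=T_n\}=\{S_{n+1}=S_n+1\}\subset\{S_{n+1}\neq S_n\}$; it concludes by asserting that conditioning on $\{T_n=k\}$ gives the exact identity $\P(T_{n+1}=T_n)=\sum_{\alpha\in A}p_\alpha^2=m_2$, a constant strictly between $0$ and $1$, whence the contradiction. That identity is not justified, and is false in general: given $\{T_n=k\}$ the event $\{T_{n+1}=T_n\}$ is $\{x_{n+1}=x_{n+1-k}\}$, and although $x_{n+1}$ is independent of $x_1^n$ with law $\theta$, the letter $x_{n+1-k}$ is no longer $\theta$-distributed after conditioning on $\{T_n=k\}$; for example, with $\C=\{a,b\}$ and $\theta=(p,q)$, enumerating the eight $3$-strings gives $\P(T_4=T_3)=m_2-pq(p-q)^2\neq m_2$ whenever $p\neq q$. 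All the argument really needs is a lower bound on $\P(T_{n+1}=T_n)$ that is uniform in $n$, and this is exactly what you establish: you pass to the sub-event $\{S_n=0,\ x_{n+1}=x_1\}\subset\{T_{n+1}=T_n\}$, on which the independence computation $f(n)=\E[\one\{S_n=0\}\,p_{x_1}]$ is exact, and you obtain $\liminf_n f(n)>0$ from Corollary \ref{nonoverlap} (which gives $\lim_n\P(S_n=0)>(1-p_1)(1-m_2)>0$) together with the truncation of the alphabet to the finite set $A_\varepsilon$ --- precisely the point where, as you observe, a countable alphabet could otherwise make the bound collapse. The price of your route is that it is not self-contained, since it leans on Corollary \ref{nonoverlap}; the gain is that every step holds for finite and countable alphabets alike, and your argument can be read as a repair of the gap in the paper's own proof.
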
    

\begin{proof}
Suppose that $S_n$ converges to $S$ in probability.
Then, for all $\epsilon>0$
\[
\lim_{n \go \infty} \P(|S_{n+1}-S_n|< \epsilon) =1 \ .
\]
Consider $\epsilon<1$.
Since, by definition  $S_n=n-T_n$ one has
$$
\{ | S_{n+1} - S_n| < \epsilon \} = \{ |T_n - T_{n+1} + 1| < \epsilon \} 
\ . 
$$
Since $T_n$ is non decreasing and takes only positive integer values
\[
\{ |T_{n+1} - T_{n}| < \epsilon \}
= \{ T_{n+1} = T_{n} \} = \{ S_{n+1} = S_n+1 \}\ .
\]
Conditioning on $\{T_n=k\}$ we get
$$
\P(T_{n+1} = T_{n} ) = \sum_{\alpha \in A} p_\alpha^2 < 1\ .
$$
This ends the proof.
\end{proof}

{\bf Acknowledgments.}
We thank Anatoli Yambarstev, Andrea Vanessa Rocha and Andrei Toom who let us know about
Janson's argument.
The problem of computing the fluctuations of $S_n$ was suggested by Pablo Ferrari.
The problem of counting the number of non-selfoverlapping strings was suggested by Andrei Toom on the Brazilian School of Probability . 
M.A. is partialy supported by CNPq grant 312904/2009-6.
R. L. received CNPq grant 560764/2008-1, between 2008 and 2010.
This paper is part of the Projeto Regular Fapesp 2010/19748-7.
This work is part of USP project "Mathematics, computation, language and the brain''(Programa da Reitoria  da Universidade de S\~{a}o Paulo de Incentivo \`{a} Pesquisa - Projeto MaCLinC - Matem\'{a}tica, Computa\c{c}\~{a}o, Linguagem e C\'{e}rebro, Processo USP no. 2011.1.9367.1.5).

\end{document}